\providecommand{\noopsort}[1]{} % fix for bib order
\definecolor{dark-red}{rgb}{0.5,0.15,0.15}
\definecolor{dark-blue}{rgb}{0.15,0.15,0.6}
\definecolor{dark-green}{rgb}{0.15,0.6,0.15}
\numberwithin{equation}{section}% makes equat numb contain the section
\newtheorem{ThmAlpha}{Theorem}
\newtheorem{Thm}[equation]{Theorem}
\newtheorem*{Thm*}{Theorem}
\newtheorem{Prop}[equation]{Proposition}
\newtheorem{Lem}[equation]{Lemma}
\newtheorem{Cor}[equation]{Corollary}
\newtheorem*{Conj*}{Conjecture}
\newtheorem*{Que*}{Question}
\theoremstyle{remark}
\newtheorem{Def}[equation]{Definition}
\newtheorem{Not}[equation]{Notation}
\newtheorem{Exa}[equation]{Example}
\newtheorem{Hyp}[equation]{Hypothesis}
\newtheorem{Rem}[equation]{Remark}
\tikzset{
    labelrotatebelow/.style={anchor=north, rotate=90, inner sep=1.0mm}
}
\tikzset{
    labelrotateabove/.style={anchor=south, rotate=90, inner sep=1.0mm}
}
\newcommand{\nc}{\newcommand}
\nc{\dmo}{\DeclareMathOperator}
\renewcommand{\emptyset}{\varnothing}
\nc{\Beren}[1]{{\color{MidnightBlue}#1}}
\nc{\Drew}[1]{{\color{OliveGreen}#1}}
\nc{\Tobi}[1]{{\color{Orange}#1}}
\nc{\Dout}[1]{\Drew{\sout{#1}}}
\nc{\Bout}[1]{\Beren{\sout{#1}}}
\nc{\Tout}[1]{\Tobi{\sout{#1}}}
\nc{\overbar}[1]{\mkern 1.5mu\overline{\mkern-1.5mu#1\mkern-1.5mu}\mkern 1.5mu}
\nc{\haux}{\mathrm{h}}
\nc{\supph}{\supp^\haux}
\nc{\Supph}{\Supp^\haux}
\nc{\Spch}{\Spc^\haux}
\nc{\kappaaux}{g}
\nc{\kappaCh}{{\kappaaux(\cat C_h)}}
\nc{\kappam}{{\kappaaux({\frak m})}}
\nc{\kappaP}{{\kappaaux(\cat P)}}
\nc{\kappaQ}{{\kappaaux(\cat Q)}}
\nc{\kappaCP}{{\kappaaux_{\cat C}(\cat P)}}
\nc{\kappaDP}{{\kappaaux_{\cat D}(\cat P)}}
\nc{\kappaCQ}{{\kappaaux_{\cat C}(\cat Q)}}
\nc{\kappaDQ}{{\kappaaux_{\cat D}(\cat Q)}}
\nc{\kappaphiB}{{\kappaaux(\phi(\cat B))}}
\nc{\kappaphiQ}{{\kappaaux(\varphi(\cat Q))}}
\dmo{\Sub}{Sub}
\dmo{\Tor}{Tor}
\nc{\SpEn}{\cat S_{E(n)}}
\nc{\SpEnf}{\cat S_n}
\nc{\Lcomp}{L^{\mathrm{com}}} %I made this and the next one commands because I'm unsure of the choice of notation
\nc{\Ucomp}{U^{\mathrm{com}}}
\nc{\Loco}[1]{\Loc_{\otimes}\hspace{-0.3ex}\langle #1 \rangle}
\nc{\bbullet}{{\scriptscriptstyle\hspace{-1pt}\bullet}}
\nc{\bullett}{{\scriptscriptstyle\bullet}\hspace{-1pt}}
\nc{\LF}{L\hspace{-0.2ex}F}
\nc{\SpG}{\Sp^G}
\nc{\EG}{\bbE_G}
\nc{\DEG}{\Der(\EG)}
\nc{\DE}{\Der(\bbE)}
\nc{\Prst}{{\cat P}\mathrm{r^{st}}}
\nc{\Mack}[2]{\mathrm{Mack}_{#1}(#2)}
\nc{\SC}{S\cat C}
\dmo{\fin}{{fin}}
\dmo{\DM}{DM}
\dmo{\fp}{fp}
\nc{\DMQ}{\DM_Q}
\dmo{\DerKal}{DMack}
\dmo{\Der}{D}
\dmo{\DMot}{DMot}
\dmo{\rmH}{H}
\dmo{\piu}{\underline{\pi}}
\dmo{\Sphere}{\mathbb{S}}
\nc{\HA}{{\rmH \hspace{-0.2em}\bbA}}
\nc{\HZ}{{\rmH \hspace{-0.2em}\bbZ}}
\nc{\HZbar}{{\rmH \hspace{-0.2em}\underline{\bbZ}}}
\nc{\Fp}{{\bbF_{\hspace{-0.1em}p}}}
\nc{\HFp}{{\rmH \hspace{-0.15em}\bbF_{\hspace{-0.1em}p}}}
\nc{\HQ}{{\rmH \hspace{-0.15em}\bbQ}}
\nc{\DHZG}{\Der(\HZ_G)}
\nc{\DHZH}{\Der(\HZ_H)}
\nc{\DHZK}{\Der(\HZ_K)}
\nc{\DHZGN}{\Der(\HZ_{G/N})}
\nc{\DHZGG}{\Der(\HZ_{G/G})}
\nc{\DHZCp}{\Der(\HZ_{C_p})}
\nc{\DHZGprime}{\Der(\HZ_{G'})}
\nc{\DHZ}{\Der(\HZ)}
\nc{\frakp}{\mathfrak{p}}
\nc{\frakq}{\mathfrak{q}}
\nc{\frakS}{\mathfrak{S}}
\nc{\frakT}{\mathfrak{T}}
\nc{\Z}{\mathbb{Z}}
\nc{\SSG}{\text{sSet}_*^G}
\nc{\sSet}{\text{sSet}}
\dmo{\csupp}{csupp}
\dmo{\Con}{Conj}
\dmo{\Id}{Id}
\dmo{\Loc}{Loc}
\dmo{\rmK}{\textrm{\rm K}}
\dmo{\Spc}{Spc}
\dmo{\thick}{thick}
\nc{\thickt}[1]{\thick_\otimes\langle #1 \rangle}
\dmo{\cone}{cone}
\dmo{\End}{End}
\dmo{\Mor}{Mor}
\dmo{\Hom}{Hom}
\dmo{\id}{id}
\dmo{\incl}{incl}
\dmo{\Img}{Im}
\dmo{\im}{im}
\dmo{\Ker}{Ker}
\dmo{\ind}{ind}
\dmo{\CoInd}{coind}
\dmo{\res}{res}
\dmo{\infl}{infl}
\dmo{\triv}{triv}
\dmo{\Tel}{Tel} %telescope
\dmo{\grMod}{grMod}%
\dmo{\Mod}{Mod}%
\dmo{\opname}{op}
\dmo{\SH}{SH}% ground name for cat of spectra
\dmo{\smallb}{b}% ground exponent for ``bounded''
\dmo{\Spec}{Spec}
\dmo{\supp}{supp}
\dmo{\Supp}{Supp}
\nc{\SHc}{{\SH^c}}
\nc{\SHp}{{\SH_{(p)}}}
\nc{\SHcp}{{\SH^c_{(p)}}}
\nc{\SHG}{\SH(G)}
\nc{\SHGp}{\SH(G)_{(p)}}
\nc{\SHGc}{\SHG^c}
\nc{\SHGcp}{\SHG^c_{(p)}}
\nc{\quadtext}[1]{\quad\textrm{#1}\quad}
\nc{\qquadtext}[1]{\qquad\textrm{#1}\qquad}
\nc{\adj}{\dashv}
\nc{\adjto}{\rightleftarrows}
\nc{\bbL}{\mathbb{L}}
\nc{\bbA}{\mathbb{A}}
\nc{\bbE}{\mathbb{E}}
\nc{\bbN}{\mathbb{N}}
\nc{\bbQ}{\mathbb{Q}}
\nc{\bbZ}{\mathbb{Z}}
\nc{\bbF}{\mathbb{F}}
\nc{\cat}[1]{\mathscr{#1}}%or: \nc{\cat}[1]{\mathcal{#1}}
\nc{\ie}{{\sl i.e.}, }
\nc{\into}{\mathop{\rightarrowtail}}
\nc{\inv}{^{-1}}
\nc{\isoto}{\mathop{\overset{\sim}\to}}
\nc{\isotoo}{\mathop{\overset{\sim}\too}}
\nc{\onto}{\mathop{\twoheadrightarrow}}
\nc{\too}{\mathop{\longrightarrow}\limits}
\nc{\mapstoo}{\longmapsto}
\nc{\adh}[1]{\overline{#1}}% adherence
\nc{\adhpt}[1]{\adh{\{#1\}}}% adherence of a pt
\nc{\aka}{{a.\,k.\,a.}\ }
\nc{\calF}{\mathcal{F}}
\nc{\eg}{{\sl e.\,g.}}
\nc{\Homcat}[1]{\Hom_{\cat #1}}
\nc{\hook}{\hookrightarrow}
\nc{\ideal}[1]{\langle #1\rangle}
\nc{\ihom}{{\underline{\hom}}}
\nc{\Mid}{\,\big|\,}
\nc{\MMod}{\,\text{-}\Mod}%
\nc{\GrMMod}{\,\text{-}\grMod}%
\nc{\op}{^{\opname}}
\nc{\oto}[1]{\overset{#1}\to}
\nc{\otoo}[1]{\overset{#1}{\,\too\,}}
\nc{\sminus}{\!\smallsetminus\!}
\nc{\poplus}[1]{^{\oplus #1}}%
\nc{\potimes}[1]{^{\otimes #1}}% tensor power
\nc{\sbull}{{\scriptscriptstyle\bullet}}%\mathbf{\cdot}}%{}}
\nc{\SET}[2]{\big\{\,#1\Mid#2\,\big\}}
\nc{\SpcK}{\Spc(\cat K)}% most used
\nc{\then}{\Rightarrow}
\nc{\unit}{\mathbb{1}}% unit for \otimes
\nc{\xra}{\xrightarrow}
\nc{\phigeom}[1]{\widetilde{\Phi}^{#1}}
\dmo{\Oname}{O}
\dmo{\proper}{proper}% for proper subgroups
\dmo{\lenormal}{\unlhd}
\dmo{\lnormal}{\lhd}
\nc{\normal}{\trianglelefteq}%\lhd
\nc{\Op}{\Oname^p}% O^p for maximal p-normal subgroup
\nc{\Oq}{\Oname^q}% as above for p=q
\dmo{\Sp}{Sp}
\dmo{\Ho}{Ho}
\dmo{\Fin}{Fin}
\dmo{\add}{add}
\dmo{\Fun}{Fun}
\dmo{\Ext}{Ext}
\dmo{\CAlg}{CAlg}
\dmo{\CMon}{CMon}
\dmo{\CC}{\cat C} %beren: I changed these, but left the O
\dmo{\DD}{\cat D}
\dmo{\OO}{\mathcal{O}}
\dmo{\Map}{Map}
\dmo{\Span}{Span}
\dmo{\N}{N}
\dmo{\Cat}{Cat}
\dmo{\colim}{colim}
\dmo{\hocolim}{hocolim}
\dmo{\Ch}{Ch}
\dmo{\A}{\mathbb{A}^{eff}}
\nc{\AGeff}{\mathbb{A}_G^{\mathrm{eff}}}
\nc{\BGeff}{\mathcal{B}_G^{\mathrm{eff}}}
\nc{\BG}{{\mathcal{B}_G}}
\nc{\NBGeff}{{\N}{\BGeff}}
\dmo{\Ab}{Ab}
\dmo{\Set}{Set}
\dmo{\ev}{ev}
\dmo{\Spcl}{Spcl}
\nc{\Funadd}{\Fun_{\add}}
\dmo{\proj}{proj}
\dmo{\cof}{cof}
\dmo{\Coideal}{Coideal}
\dmo{\gen}{gen}
\nc{\mT}{\kern-0.5em\mod\kern-0.1em\text{-}\cat{T}^c}
\nc{\mTc}{\kern-0.5em\mod\kern-0.1em\text{-}\cat{T}^c}
\nc{\MTc}{\Mod\kern-0.1em\text{-}\cat{T}^c}
\nc{\MT}{\Mod\kern-0.1em\text{-}\cat{T}}
\newcounter{enum-resume-hack}
\begin{document}

\title[The comparison between homological and triangular support]{Stratification and the comparison between homological and tensor triangular support}

\author{Tobias Barthel}
\author{Drew Heard}
\author{Beren Sanders}
\date{\today}

%%%
% this is a hack to remove the indentation in the address information, since drew's address line was line-breaking in a slightly ugly way
%%%
\makeatletter
\patchcmd{\@setaddresses}{\indent}{\noindent}{}{}
\patchcmd{\@setaddresses}{\indent}{\noindent}{}{}
\patchcmd{\@setaddresses}{\indent}{\noindent}{}{}
\patchcmd{\@setaddresses}{\indent}{\noindent}{}{}
\makeatother

\address{Tobias Barthel, Max Planck Institute for Mathematics, Vivatsgasse 7, 53111 Bonn, Germany}
\email{tbarthel@mpim-bonn.mpg.de}
\urladdr{\href{https://sites.google.com/view/tobiasbarthel/home}{https://sites.google.com/view/tobiasbarthel/home}}

\address{Drew Heard, Department of Mathematical Sciences, Norwegian University of Science and Technology, Trondheim}
\email{drew.k.heard@ntnu.no}
\urladdr{\href{https://folk.ntnu.no/drewkh/}{https://folk.ntnu.no/drewkh/}}

\address{Beren Sanders, Mathematics Department, UC Santa Cruz, 95064 CA, USA}
\email{beren@ucsc.edu}
\urladdr{\href{http://people.ucsc.edu/~beren/}{http://people.ucsc.edu/$\sim$beren/}}

\begin{abstract}
We compare the homological support and tensor triangular support for `big' objects in a rigidly-compactly generated tensor triangulated category.  We prove that the comparison map from the homological spectrum to the tensor triangular spectrum is a bijection and that the two notions of support coincide whenever the category is stratified, extending work of Balmer. Moreover, we clarify the relations between salient properties of support functions and exhibit counter-examples highlighting the differences between homological and tensor triangular support.
%\vspace{-5ex}
\end{abstract}

%\keywords{}
%\subjclass[2020]{55P42, 55U35, 18G80}
% 55P42 // stable homotopy theory, spectra
% 55U35 // abstract and axiomatic homotopy theory in algebraic topology
% 18G80 // derived categories, triangulated categories
% 55P91 // equivariant homotopy theory in algebraic topology
%\subjclass[2010]{55P42, 55U35, 18E30}
%%%

\thanks{The first-named author would like to thank the Max Planck Institute for Mathematics for its hospitality. The second-named author is supported by grant number TMS2020TMT02 from the Trond Mohn Foundation. The third-named author is supported by NSF grant~DMS-1903429.}

\maketitle

\vspace{-3ex}
{
\hypersetup{linkcolor=black}
\tableofcontents
}
\vspace{-3ex}

\section{Introduction}

Support functions for the objects of a given tensor triangulated category have been employed in a variety of contexts to establish classification theorems.  Prominent historical examples include the support varieties of modular representation theory \cite{Carlson90,BensonCarlsonRickard95,BensonCarlsonRickard97}, chromatic support in stable homotopy theory \cite{Hopkins87,HopkinsSmith98}, and notions of support for derived categories in algebraic geometry \cite{Neeman92a, Thomason97}. Balmer \cite{Balmer05a} unified these developments by constructing a universal notion of support $(\Spc(\cat K), \supp_{\cat K})$ for any essentially small tensor triangulated category~$\cat K$. Taking values in the Balmer spectrum $\Spc(\cat K)$, this universal notion of support classifies the thick tensor-ideals of $\cat K$, and provides an abstract conceptual generalization of the specific support theories that arise in different subjects.

More recently, motivated by the elusive search for residue fields in tensor triangular geometry, \cite{BalmerKrauseStevenson19} have introduced the so-called homological residue fields of~$\cat K$.  They are parametrized by a new space $\Spch(\cat K)$ called the homological spectrum, and they have been used by Balmer \cite{Balmer20_nilpotence} to introduce a new notion of support $(\Spch(\cat K),\supph_{\cat K})$ for the objects of $\cat K$, which complements the universal notion of support $(\Spc(\cat K),\supp_{\cat K})$.  By virtue of the universal property, there is a canonical continuous map $\phi:\Spch(\cat K) \to \Spc(\cat K)$ such that $\supph_{\cat K} = \phi^{-1}(\supp_{\cat K})$.  This comparison map $\phi$ is surjective (under very mild hypotheses) and turns out to be bijective in all known examples \cite[Section 5]{Balmer20_nilpotence}. This motivates the following conjecture:\footnote{Personal communication; see also \cite[Rem.~5.15]{Balmer20_nilpotence}}

\begin{Conj*}[Balmer]
	The map $\phi\colon \Spch(\cat K) \to \Spc(\cat K)$ is always a bijection.
\end{Conj*}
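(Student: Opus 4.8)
Since $\phi$ is already known to be surjective under the standing hypotheses, the conjecture reduces to showing that $\phi$ is \emph{injective}: over each prime $\cat P \in \Spc(\cat K)$ there must lie exactly one homological prime, i.e.\ one maximal Serre $\otimes$-ideal $\cat B$ of the abelian category $\cat A := \mathrm{mod}\text{-}\cat K$ with $\phi(\cat B) = \{x \in \cat K \mid h(x) \in \cat B\} = \cat P$, where $h \colon \cat K \to \cat A$ is restricted Yoneda. My plan has two stages: first, prove bijectivity when the ambient rigidly-compactly generated category $\cat T$ (with $\cat T^c = \cat K$) is \emph{stratified}, by comparing the homological support $\Supph$ and the Balmer--Favi support $\Supp$ as classifying invariants for big objects; second, reduce the general statement to the case of a \emph{tt-field}, $\Spc(\cat K) = \{\ast\}$, in which stratification becomes the assertion that $\cat T$ is \emph{minimal}.

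\textbf{Stage 1: the stratified case.}
Assume $\cat T$ stratified, so that $\Supp$ induces a bijection between the localising $\otimes$-ideals of $\cat T$ and the (Thomason, resp.\ arbitrary) subsets of $\Spc(\cat K)$. I would develop $\Supph$ for big objects in parallel and prove that it enjoys the same formal package: a tensor-product formula, detection of vanishing (which follows at once from surjectivity of $\phi$ together with the relation $\Supph(M)=\phi^{-1}(\Supp(M))$), the local-to-global principle, and --- the technical heart --- \emph{minimality} of the homological stalks. Granting this, $\Supph$ likewise induces a bijection between the localising $\otimes$-ideals of $\cat T$ and the subsets of $\Spch(\cat K)$. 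Since these are the same localising $\otimes$-ideals, classified by $\Supp$ via subsets of $\Spc(\cat K)$ and by $\Supph$ via subsets of $\Spch(\cat K)$, and since $\Supph(M)=\phi^{-1}(\Supp(M))$, the preimage map $\phi^{-1}\colon \mathcal P(\Spc(\cat K)) \to \mathcal P(\Spch(\cat K))$ is a composite of two bijections, hence a bijection; therefore $\phi$ is a bijection. Along the way this also yields that $\Supph$ and $\Supp$ coincide under $\phi$, as promised in the abstract.

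\textbf{Stage 2: reduction to tt-fields, and the main obstacle.}
For an arbitrary $\cat K$, I would localise so as to isolate a single prime $\cat P$: the formation of $\Spch$ and $\phi$ is compatible with Verdier quotients with no hypotheses, $\Spch(\cat K/\cat J) \cong \{\cat B \mid \cat J \subseteq \phi(\cat B)\}$ compatibly with $\phi$, and with the Balmer--Favi local category $\Gamma_{\cat P} L_{\cat P} \cat T$ when available, so that the fibre $\phi^{-1}(\{\cat P\})$ is identified with the homological spectrum of the residue category at $\cat P$ --- a tt-field. For a tt-field one has $\Spc(\cat K) = \{\ast\}$, so Stage 1 applies \emph{provided} $\cat T$ is minimal, i.e.\ has no localising $\otimes$-ideals other than $0$ and $\cat T$; the lattice argument then degenerates to the statement that $\mathcal P(\Spch(\cat K))$ has two elements, i.e.\ that there is a unique homological prime. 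Thus everything comes down to the assertion that \emph{every tt-field has a minimal big category} --- equivalently, $\mathrm{Mod}\text{-}\cat K$ has no exotic localising $\otimes$-ideals, equivalently, $\mathrm{mod}\text{-}\cat K$ has a unique maximal Serre $\otimes$-ideal. This is the main obstacle, and it is open: it is precisely why the unconditional theorem must assume stratification, and resolving it --- perhaps through a strengthening of the Balmer--Krause--Stevenson homological nilpotence theorem controlling not merely $\otimes$-nilpotence but the entire lattice of Serre $\otimes$-ideals of $\mathrm{mod}\text{-}\cat K$ --- is where genuinely new input is needed. A subsidiary difficulty is that, absent noetherian-type hypotheses on $\Spc(\cat K)$, the passage to a local category in Stage 2 must be replaced by an argument with pro-systems of Verdier quotients.
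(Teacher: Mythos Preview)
The statement is a \emph{conjecture}: the paper does not prove it unconditionally and explicitly says it remains open. What the paper does prove is exactly your Stage~1 conclusion --- bijectivity of $\phi$ when $\cat T$ is stratified (with $\Spc(\cat T^c)$ weakly noetherian). So your proposal should be read as: (i) an argument for the stratified case, plus (ii) a reduction programme for the general case which you yourself flag as incomplete. I will comment on both.

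\textbf{Stage 1: a genuine gap.} Your derivation of the detection property for $\Supph$ is circular. You write that detection ``follows at once from surjectivity of $\phi$ together with the relation $\Supph(M)=\phi^{-1}(\Supp(M))$'', but that relation is not available a priori --- it is one of the things to be proved, and the paper shows by example (a non-noetherian ring of Neeman) that it can \emph{fail} even when $\phi$ is a bijection. In the paper's logic the implication runs the other way: detection for $\Supph$ is established first, and the relation $\Supph=\phi^{-1}\Supp$ is then deduced from it. Your plan to develop a full ``homological stratification'' (local-to-global, minimality of homological stalks) is also much heavier machinery than is needed.

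\textbf{How the paper argues.} The key technical input is the computation $\Supp_{\cat T}(E_{\cat B})=\{\phi(\cat B)\}$ for every homological prime $\cat B$, proved by reducing to the local case and invoking the half-tensor formula. From this, bijectivity of $\phi$ follows in one line once $\Supp$ has the tensor product formula: if $\cat B\neq\cat B'$ with $\phi(\cat B)=\phi(\cat B')$, then $E_{\cat B}\otimes E_{\cat B'}=0$ by Balmer--Krause--Stevenson, yet $\Supp(E_{\cat B}\otimes E_{\cat B'})=\Supp(E_{\cat B})\cap\Supp(E_{\cat B'})=\{\phi(\cat B)\}\neq\emptyset$, a contradiction. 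To close the loop under stratification, the paper proves detection for $\Supph$ directly: given $t\neq 0$, pick $\cat P$ with $\kappaP\otimes t\neq 0$ and any $\cat B$ over $\cat P$; minimality at $\cat P$ together with $\Supp(E_{\cat B})=\{\cat P\}$ forces $E_{\cat B}\in\Loco{\kappaP\otimes t}$, whence $[\kappaP\otimes t,E_{\cat B}]\neq 0$ and $\cat B\in\Supph(t)$. This avoids any need to stratify via $\Supph$.

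\textbf{Stage 2.} Your reduction to tt-fields is a reasonable heuristic, and you are right that the residual problem --- uniqueness of the maximal Serre $\otimes$-ideal over a tt-field, equivalently minimality of the big category --- is exactly the open core of the conjecture. The paper does not attempt this; it only records that a counterexample must be non-stratified. So Stage~2 is not a proof but a restatement of the difficulty, as you acknowledge.
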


In fact, this conjecture admits a purely point-set topological reformulation without reference to the triangular spectrum, as we show in \cref{prop:homeo}:

\begin{ThmAlpha}
The comparison map $\phi$ is a bijection if and only if $\Spch(\cat K)$ is a $T_0$-space; and if that is the case, then $\phi$ is a homeomorphism.
\end{ThmAlpha}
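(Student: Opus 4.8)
The plan is to observe that the topology on $\Spch(\cat K)$ is pulled back along $\phi$ from the topology on $\Spc(\cat K)$, and then to deduce the statement formally from the fact that $\Spc(\cat K)$ is a $T_0$-space. To set this up, recall that by \cite{Balmer05a} the spectrum $\Spc(\cat K)$ is a $T_0$-space and that the subsets $\supp_{\cat K}(x)$, for $x$ an object of $\cat K$, form a basis of closed subsets which is stable under finite unions (as $\supp_{\cat K}(x)\cup\supp_{\cat K}(y)=\supp_{\cat K}(x\oplus y)$), so every closed subset of $\Spc(\cat K)$ is an intersection of sets of this form. By \cite{BalmerKrauseStevenson19,Balmer20_nilpotence} the analogous statements hold for the subsets $\supph_{\cat K}(x)$ in $\Spch(\cat K)$, and, as recalled above, $\supph_{\cat K}(x)=\phi^{-1}(\supp_{\cat K}(x))$. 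Since $\phi^{-1}(-)$ preserves arbitrary intersections and finite unions, these facts combine to show that the closed subsets of $\Spch(\cat K)$ are \emph{exactly} the preimages $\phi^{-1}(W)$ of closed subsets $W\subseteq\Spc(\cat K)$: on the one hand $\bigcap_i\supph_{\cat K}(x_i)=\phi^{-1}\bigl(\bigcap_i\supp_{\cat K}(x_i)\bigr)$; on the other hand, every closed $W\subseteq\Spc(\cat K)$ equals $\bigcap_i\supp_{\cat K}(x_i)$ for suitable $x_i$, so $\phi^{-1}(W)=\bigcap_i\supph_{\cat K}(x_i)$ is closed. I would record this identification as a short lemma.

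Granting it, the rest is point-set topology. Because the closed subsets of $\Spch(\cat K)$ are precisely the sets $\phi^{-1}(W)$, two points $\cat B,\cat B'\in\Spch(\cat K)$ lie in the same closed subsets if and only if $\phi(\cat B)$ and $\phi(\cat B')$ do, and, $\Spc(\cat K)$ being $T_0$, this holds if and only if $\phi(\cat B)=\phi(\cat B')$. Thus two points of $\Spch(\cat K)$ are topologically indistinguishable precisely when they have the same image under $\phi$, so $\phi$ exhibits $\Spc(\cat K)$ as the Kolmogorov quotient of $\Spch(\cat K)$. In particular $\Spch(\cat K)$ is $T_0$ if and only if $\phi$ is injective, and since $\phi$ is surjective under the standing hypotheses (see \cite{Balmer20_nilpotence}) this is equivalent to $\phi$ being a bijection. (The implication from $\phi$ being a bijection to $\Spch(\cat K)$ being $T_0$ is in any case immediate, since any continuous injection into a $T_0$-space has $T_0$ source.)

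Finally, if $\phi$ is a bijection then it is a homeomorphism: it is continuous by construction, and it is a closed map, because each closed subset of $\Spch(\cat K)$ has the form $\phi^{-1}(W)$ with $W$ closed and therefore has closed image $\phi(\phi^{-1}(W))=W$ by surjectivity; a continuous closed bijection is a homeomorphism. The argument has no serious obstacle: the one step that is not purely formal is the upgrade from the pointwise identity $\supph_{\cat K}(x)=\phi^{-1}(\supp_{\cat K}(x))$ to the assertion that \emph{every} closed subset of $\Spch(\cat K)$ is pulled back from $\Spc(\cat K)$, together with keeping track of the mild hypotheses that make $\phi$ surjective; everything else rests on Balmer's theorem that the triangular spectrum is $T_0$.
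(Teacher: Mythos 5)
Your proof is correct and follows essentially the same route as the paper: identify $\Spc(\cat K)$ as the Kolmogorov quotient of $\Spch(\cat K)$ and then read off the statement. The only (minor) organizational difference is that you derive the Kolmogorov quotient property from the global observation that the closed subsets of $\Spch(\cat K)$ are exactly the preimages of closed subsets of $\Spc(\cat K)$, whereas the paper's Lemma 3.9 works at the level of individual points, first computing $\overbar{\{\cat B\}}=\phi^{-1}(\overbar{\{\phi(\cat B)\}})$ and then separately checking that $\phi$ is a quotient map; your packaging gives both facts in one stroke.
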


However, the comparison between the homological and triangular perspectives on tensor triangular geometry runs deeper, since even in the case that $\phi$ is a homeomorphism, they in general afford different support theories, as we discuss next.

Indeed, in many contexts the tt-category $\cat K$ arises as the full subcategory of compact objects in a rigidly-compactly generated tt-category $\cat T$.  This leads to the problem of finding a suitable construction of support for `big' objects in $\cat T$ which extends the universal notion of support for compact objects.  A primary motivation is to use such a notion of big support to stratify the category $\cat T$, that is, to classify the localizing tensor-ideals of $\cat T$, much as Balmer's universal notion of support classifies the thick tensor-ideals of $\cat K=\cat T^c$.  Different approaches have been proposed. In a seminal series of papers \cite{BensonIyengarKrause08,BensonIyengarKrause11b,BensonIyengarKrause11a} building on \cite{HoveyPalmieriStrickland97}, Benson, Iyengar and Krause have developed a theory of `big' support in terms of a suitable ring action on $\cat T$, which led to important applications in modular representation theory.  On the other hand, Balmer and Favi \cite{BalmerFavi11} give a construction of big support $(\Spc(\cat T^c),\Supp_{\cat T})$ in the setting of tensor triangular geometry.  Both approaches admit a uniform generalization through the work of Stevenson \cite{Stevenson13}; see also \cite{bhs1}.

These approaches to big support fundamentally depend on some noetherian hypothesis.  For example, the Balmer--Favi notion of support $\Supp_{\cat T}$ does not provide an extension of the universal support on $\cat T^c$ to the whole of $\cat T$ without some such hypothesis.  Recently, Balmer \cite{Balmer20_bigsupport} has extended homological support to big objects.  The resulting notion of support $(\Spch(\cat T^c),\Supph_{\cat T})$ does not require any noetherian hypotheses and extends the pull-back $\supph_{\cat T^c} = \phi^{-1}(\supp_{\cat T^c})$ of the universal support to the whole of $\cat T$.

In this paper, we study the relationship between the homological spectrum $\Spch(\cat T^c)$ and the Balmer spectrum $\Spc(\cat T^c)$ via the comparison map $\phi$ as well as the relation between the notions of big support, $\Supph_{\cat T}$ and $\Supp_{\cat T}$, which inhabit these spaces. Our main result is \cref{thm:homological}:

\begin{ThmAlpha}\label{thm:A}
	Let $\cat T$ be a rigidly-compactly generated tt-category whose spectrum $\Spc(\cat T^c)$ is weakly noetherian.  If $\cat T$ is stratified, then the following statements hold:
    \begin{enumerate}
        \item the comparison map $\phi\colon \Spch(\cat T^c) \to \Spc(\cat T^c)$ is a homeomorphism;
        \item $\Supph_{\cat T}(t) = \phi^{-1}(\Supp_{\cat T}(t))$ for all $t \in \cat T$; and
        \item both $\Supp$ and $\Supph$ detect trivial objects and satisfy the tensor product property.
    \end{enumerate}
\end{ThmAlpha}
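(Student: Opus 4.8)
The plan is to reduce all three assertions to the single computation
\[
  \Supph_{\cat T}(\Gamma_{\frakp}\unit)\;=\;\phi^{-1}(\{\frakp\})\qquad\text{for every }\frakp\in\Spc(\cat T^c),
\]
and then feed in stratification through the minimality of the local categories $\Gamma_{\frakp}\cat T$. Since $\Spc(\cat T^c)$ is weakly noetherian, every prime is weakly visible, so $\Gamma_{\frakp}\unit$ and $\Supp_{\cat T}$ are defined, $\Supp_{\cat T}(\Gamma_{\frakp}\unit)=\{\frakp\}$, and $\Supp_{\cat T}$ restricts to $\supp_{\cat T^c}$ on compacts. I will use the standard features of Balmer's big homological support: it detects trivial objects; for a homological prime $\mathcal B$ the homological residue field functor $\overline{h}_{\mathcal B}\colon\cat T\to\overline{\cat A}_{\mathcal B}$ is coproduct-preserving and homological, so $\ker\overline{h}_{\mathcal B}$ is a localizing $\otimes$-ideal of $\cat T$; it satisfies the projection formula $\overline{h}_{\mathcal B}(x\otimes t)\cong\overline{h}_{\mathcal B}(x)\otimes\overline{h}_{\mathcal B}(t)$ for rigid $x$; on compacts $\overline{h}_{\mathcal B}(x)=0$ iff $x\in\phi(\mathcal B)$; and $\overline{\cat A}_{\mathcal B}$ has no nonzero $\otimes$-zero-divisors.

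\emph{The key computation.} Let $\phi(\mathcal B)=\frakp$. For ``$\subseteq$'', which should need no stratification: if $\cat W$ is a Thomason subset with $\frakp\notin\cat W$ then every compact supported in $\cat W$ lies in $\frakp$, so $\overline{h}_{\mathcal B}$ kills $\Gamma_{\cat W}\unit$, which lies in the localizing subcategory generated by those compacts; writing $\Gamma_{\frakq}\unit=\Gamma_{\cat V}L_{\cat W}\unit$ for Thomason subsets $\cat W\subseteq\cat V$ with $\cat V\smallsetminus\cat W=\{\frakq\}$, one deduces $\overline{h}_{\mathcal B}(\Gamma_{\frakq}\unit)=0$, i.e.\ $\Supph_{\cat T}(\Gamma_{\frakq}\unit)\subseteq\phi^{-1}(\{\frakq\})$, for every $\frakq\neq\frakp$. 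The inclusion ``$\supseteq$'' is the non-vanishing $\overline{h}_{\mathcal B}(\Gamma_{\frakp}\unit)\neq0$: with $\Gamma_{\frakp}\unit=\Gamma_{\cat V}\unit\otimes L_{\cat W}\unit$, $\cat W\subseteq\cat V$, $\cat V\smallsetminus\cat W=\{\frakp\}$, the case $\frakp\notin\cat W$ gives $\overline{h}_{\mathcal B}(L_{\cat W}\unit)\cong\overline{h}_{\mathcal B}(\unit)\neq0$; picking a compact $x$ with $\frakp\in\supp x\subseteq\cat V$ (so $\overline{h}_{\mathcal B}(x)\neq0$), the identity $L_{\cat V}\unit\otimes x=0$, the projection formula, and the absence of $\otimes$-zero-divisors force $\overline{h}_{\mathcal B}(L_{\cat V}\unit)=0$; hence $L_{\cat V}\unit\otimes L_{\cat W}\unit\in\ker\overline{h}_{\mathcal B}$, and the triangle $\Gamma_{\frakp}\unit\to L_{\cat W}\unit\to L_{\cat V}\unit\otimes L_{\cat W}\unit$ yields $\overline{h}_{\mathcal B}(\Gamma_{\frakp}\unit)\cong\overline{h}_{\mathcal B}(\unit)\neq0$. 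I expect this non-vanishing to be the technical heart of the theorem.

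\emph{Proof of~(2) and~(3).} Fix $t\in\cat T$ and $\mathcal B$ over $\frakp$. If $\Gamma_{\frakp}t=0$, the localizing $\otimes$-ideal $\{s\mid\Gamma_{\frakp}s=0\}$ has support $\Spc(\cat T^c)\smallsetminus\{\frakp\}$, so by the classification it equals $\Loco{\Gamma_{\frakq}\unit\mid\frakq\neq\frakp}$; by the key computation this lies inside the $\otimes$-ideal $\ker\overline{h}_{\mathcal B}$, whence $\overline{h}_{\mathcal B}(t)=0$. If $\Gamma_{\frakp}t\neq0$, then minimality gives $\Loco{\Gamma_{\frakp}t}=\Gamma_{\frakp}\cat T\ni\Gamma_{\frakp}\unit$, so $\overline{h}_{\mathcal B}(t)=0$ would place $\Gamma_{\frakp}\unit$ in $\ker\overline{h}_{\mathcal B}$, contradicting the key computation. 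This proves $\Supph_{\cat T}(t)=\phi^{-1}(\Supp_{\cat T}(t))$, i.e.~(2). For~(3): under stratification $\Supp_{\cat T}$ classifies localizing $\otimes$-ideals, hence detects trivial objects; and minimality of $\Gamma_{\frakp}\cat T$ forbids nonzero $\otimes$-zero-divisors there (if $x\otimes y=0$ with $x,y\neq0$ then $\ker(x\otimes-)$ is a nonzero proper localizing $\otimes$-ideal of $\Gamma_{\frakp}\cat T$), so from $\Gamma_{\frakp}(s\otimes t)\cong\Gamma_{\frakp}s\otimes\Gamma_{\frakp}t$ one gets the tensor product property for $\Supp_{\cat T}$. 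Both properties transfer to $\Supph_{\cat T}$ through~(2) and the surjectivity of $\phi$, since $\phi^{-1}$ preserves intersections and $\phi^{-1}(S)=\emptyset$ iff $S=\emptyset$.

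\emph{Proof of~(1).} By \cref{prop:homeo} it suffices to show $\phi$ is injective. Suppose $\phi(\mathcal B_1)=\phi(\mathcal B_2)=\frakp$. By the key computation (both directions) each $\ker\overline{h}_{\mathcal B_i}$ contains $\{s\mid\Gamma_{\frakp}s=0\}$ but not $\Gamma_{\frakp}\unit$; since, under stratification, $\frakp\in\Supp_{\cat T}(\cat L)$ precisely when $\Gamma_{\frakp}\unit\in\cat L$, the classification forces $\ker\overline{h}_{\mathcal B_1}=\ker\overline{h}_{\mathcal B_2}=\{s\mid\Gamma_{\frakp}s=0\}$. Both residue functors therefore factor through $\cat T/\{s\mid\Gamma_{\frakp}s=0\}$, the local category at $\frakp$, which stratification makes minimal; so it remains to recover $\mathcal B$ from the kernel of its big residue field functor, equivalently to show that a minimal tt-category has a one-point homological spectrum. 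This last step --- which genuinely concerns the homological primes of the ``residue field'' category, not merely the vanishing loci of its big residue functors --- is, together with the non-vanishing in the key computation, where I expect the real difficulty to lie; everything else is formal manipulation of supports and localizing ideals.
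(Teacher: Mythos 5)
Your outline has the right shape and correctly identifies the two ingredients the paper also relies on — the support of the idempotents $g(\cat P)$ and minimality of $\Gamma_{\cat P}\cat T$ — but there is a systematic gap: throughout, you test membership in $\Supph_{\cat T}$ by checking whether $\overbar{h}_{\cat B}(-)\ne 0$, equivalently $-\otimes E_{\cat B}\ne 0$. This is the \emph{naive} homological support (\cref{rem:naivesupph}), which \emph{contains} $\Supph_{\cat T}$ but is not known to equal it in general — indeed the paper explicitly records this as an open question. Your ``$\subseteq$'' inclusions survive (if $\cat B$ is not even in the naive support then it is not in $\Supph$), but every ``$\supseteq$'' step — the non-vanishing $\overbar{h}_{\cat B}(\Gamma_{\cat P}\unit)\ne 0$ in the key computation, and the conclusion that $\Gamma_{\cat P}t\ne 0$ forces $\cat B\in\Supph_{\cat T}(t)$ — only establishes membership in the naive support, not $[\,-,E_{\cat B}]\ne 0$. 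What you actually prove is $\Supph^{\mathrm{naive}}(t)=\phi^{-1}(\Supp_{\cat T}(t))$, which together with $\Supph\subseteq\Supph^{\mathrm{naive}}$ gives one inclusion of~(2) but not the other.

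The paper bridges exactly this gap with a short but essential argument you do not have: since $\Supp_{\cat T}(E_{\cat B})=\{\phi(\cat B)\}$ (\cref{lem:support_injectives}) and $g(\cat P)\otimes t\ne 0$, minimality at $\cat P$ gives $E_{\cat B}\in\Loco{g(\cat P)\otimes t}$. Because $\{s : [s,E_{\cat B}]=0\}$ is a localizing $\otimes$-ideal (sending coproducts to products and using the $\otimes$-$[\,,\,]$ adjunction), $[\,g(\cat P)\otimes t,E_{\cat B}]=0$ would force $[E_{\cat B},E_{\cat B}]=0$, a contradiction. This is the step that upgrades ``naive'' to genuine homological support and should be considered the heart of the theorem; it is missing from your proposal, and without it (2) and (3) are only proved for the wrong invariant.

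On part (1), you yourself flag injectivity of $\phi$ as unresolved, deferring it to ``showing that a minimal tt-category has a one-point homological spectrum.'' The paper avoids this entirely: once the tensor product formula for $\Supp_{\cat T}$ is in hand (your argument for that is fine), injectivity follows because distinct $\cat B\ne\cat B'$ over the same $\cat P$ satisfy $E_{\cat B}\otimes E_{\cat B'}=0$ (Balmer), yet $\Supp_{\cat T}(E_{\cat B})\cap\Supp_{\cat T}(E_{\cat B'})=\{\cat P\}\ne\emptyset$, contradicting the tensor product formula (this is \cref{prop:tpf-implies-balmerconj}). You should import that mechanism rather than attempt the harder-looking statement you formulate. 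Two small further cautions: you list ``it detects trivial objects'' among the assumed ``standard features'' of big homological support — this is precisely the nontrivial content of the theorem and fails without stratification, so it cannot be assumed; and your appeal to ``absence of $\otimes$-zero-divisors'' in $\overbar{\cat A}_{\cat B}$ is doing the work of $\overbar{h}_{\cat B}(f_{\frak m})=0$ (\cite[Cor.~4.14]{BalmerKrauseStevenson19}), which is the cleaner thing to cite.
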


This adds several new examples to the list of tt-categories for which the comparison map $\phi$ is known to be a homeomorphism, and to which the techniques of \cite{Balmer20_nilpotence} do not readily apply. (See the examples in \cref{sec:examples}.) En route to proving the theorem, we establish a series of partial comparison results. These establish a hierarchy among different properties of support, as follows:

\begin{ThmAlpha}\label{thm:B}
	Let $\cat T$ be a rigidly-compactly generated tt-category whose spectrum $\Spc(\cat T^c)$ is weakly noetherian. 
    \begin{enumerate}
        \item For any $t \in \cat T$, we have $\phi(\Supph_{\cat T}(t)) \subseteq \Supp_{\cat T}(t)$. In particular, if $\Supph_{\cat T}$ detects trivial objects, then so does $\Supp_{\cat T}$.
        \item Consider the following three statements:
            \begin{enumerate}
                \item[(1)] $\Supph_{\cat T}(t) = \phi^{-1}(\Supp_{\cat T}(t))$ for all $t \in \cat T$.
                \item[(2)] $\Supp_{\cat T}(s \otimes t) = \Supp_{\cat T}(s) \cap \Supp_{\cat T}(t)$ for all $s,t\in \cat T$.
                \item[(3)] The comparison map $\phi\colon \Spch(\cat T^c) \to \Spc(\cat T^c)$ is a bijection.
            \end{enumerate}
            Then $(1) \Rightarrow (2) \Rightarrow (3)$. If $\Supph_{\cat T}$ detects trivial objects, then $(3) \Rightarrow (1)$.
    \end{enumerate}
\end{ThmAlpha}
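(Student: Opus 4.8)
The plan is to deduce everything from a single computation of homological support, which I will call the Key Lemma:
\[ \Supph_{\cat T}(g_\frakp \otimes t) = \phi^{-1}(\frakp) \cap \Supph_{\cat T}(t) \]
for every weakly noetherian point $\frakp \in \Spc(\cat T^c)$ (with Balmer--Favi idempotent $g_\frakp$) and every $t \in \cat T$. To prove it I would write $\{\frakp\} = V_1 \smallsetminus V_2$ with $V_1 \supseteq V_2$ Thomason subsets and factor $g_\frakp = f_{V_2} \otimes e_{V_1}$, where $f_{V_2} \otimes (-)$ is the finite localization killing the compacts supported on $V_2$, and $e_{V_1} = \hocolim_j x_j$ is a homotopy colimit of compact objects supported on $V_1$ whose supports exhaust $V_1$. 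Then $g_\frakp \otimes t = \hocolim_j x_j \otimes (g_\frakp \otimes t)$ with $x_j \otimes (g_\frakp \otimes t) = (f_{V_2} \otimes x_j) \otimes t$, and $f_{V_2} \otimes x_j$ is compact of support $\supp_{\cat T^c}(x_j) \smallsetminus V_2$. Using that $\Supph_{\cat T}$ sends coproducts to unions and is subadditive on triangles (so $\Supph_{\cat T}(\hocolim_j u_j) \subseteq \bigcup_j \Supph_{\cat T}(u_j)$), the tensor-product formula $\Supph_{\cat T}(x \otimes t) = \Supph_{\cat T}(x) \cap \Supph_{\cat T}(t)$ for compact $x$ \cite{Balmer20_bigsupport}, and the identity $\Supph_{\cat T}(x) = \phi^{-1}(\supp_{\cat T^c}(x))$ on compacts, the inclusion ``$\subseteq$'' follows by taking the union over $j$ of $\phi^{-1}(\supp_{\cat T^c}(x_j) \smallsetminus V_2) \cap \Supph_{\cat T}(t)$; and ``$\supseteq$'' follows because the identity $f_{V_2} \otimes x_j \otimes t = x_j \otimes (g_\frakp \otimes t)$ exhibits $\Supph_{\cat T}(f_{V_2} \otimes x_j \otimes t) = \Supph_{\cat T}(x_j) \cap \Supph_{\cat T}(g_\frakp \otimes t) \subseteq \Supph_{\cat T}(g_\frakp \otimes t)$, and these union to $\phi^{-1}(\frakp) \cap \Supph_{\cat T}(t)$.

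Granting the Key Lemma, all implications except $(2) \Rightarrow (3)$ are short. For (a): if $\cat B \in \Supph_{\cat T}(t)$ and $\frakp := \phi(\cat B)$, the lemma gives $\cat B \in \Supph_{\cat T}(g_\frakp \otimes t)$, so $g_\frakp \otimes t \neq 0$, i.e.\ $\frakp \in \Supp_{\cat T}(t)$; the ``in particular'' is then immediate, since $\Supph_{\cat T}$ detecting trivial objects forces $\Supp_{\cat T}(t) \supseteq \phi(\Supph_{\cat T}(t)) \neq \emptyset$ for $t \neq 0$. For $(1) \Rightarrow (2)$: the inclusion $\Supp_{\cat T}(s \otimes t) \subseteq \Supp_{\cat T}(s) \cap \Supp_{\cat T}(t)$ is formal, and for the reverse I would use that $\Supph_{\cat T}$ \emph{always} satisfies the tensor-product property \cite{Balmer20_bigsupport}, so that $(1)$ and the surjectivity of $\phi$ give $\phi^{-1}(\Supp_{\cat T}(s) \cap \Supp_{\cat T}(t)) = \Supph_{\cat T}(s) \cap \Supph_{\cat T}(t) = \Supph_{\cat T}(s \otimes t) = \phi^{-1}(\Supp_{\cat T}(s \otimes t))$, whence $(2)$. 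For $(3) \Rightarrow (1)$ under the detection hypothesis: ``$\subseteq$'' is part (a); conversely, if $\cat B \in \phi^{-1}(\Supp_{\cat T}(t))$ with $\frakp := \phi(\cat B) \in \Supp_{\cat T}(t)$, then $g_\frakp \otimes t \neq 0$, hence $\Supph_{\cat T}(g_\frakp \otimes t) \neq \emptyset$, hence by the Key Lemma $\phi^{-1}(\frakp) \cap \Supph_{\cat T}(t) \neq \emptyset$; as $\phi$ is bijective, $\phi^{-1}(\frakp) = \{\cat B\}$, so $\cat B \in \Supph_{\cat T}(t)$.

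The step I expect to be the main obstacle is $(2) \Rightarrow (3)$. Since $\phi$ is always surjective, this amounts to showing that each fibre $\phi^{-1}(\frakp)$ is a singleton (equivalently, by \cref{prop:homeo}, that $\Spch(\cat T^c)$ is $T_0$). The Key Lemma localizes the problem at $\frakp$: every homological prime over $\frakp$ factors its homological residue field functor through $g_\frakp \otimes (-)$, so $\phi^{-1}(\frakp)$ is identified with the homological spectrum of the local tt-category $\cat T_\frakp$ (the essential image of $g_\frakp \otimes (-)$, rigidly-compactly generated with one-point Balmer spectrum), and in these terms $(2)$ says precisely that $\cat T_\frakp$ is $\otimes$-local, i.e.\ has no pair of nonzero mutually $\otimes$-annihilating objects. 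The hard part is then to prove that such a category has a \emph{unique} homological prime. Here I would try to argue that two distinct homological primes over $\frakp$ must, via the joint conservativity of the homological residue field functors on compacts and the structure theory of homological residue fields \cite{BalmerKrauseStevenson19, Balmer20_nilpotence}, produce big objects of $\cat T_\frakp$ witnessing a failure of $\otimes$-locality. Making this extraction precise — controlling the passage between the finitely presented module category of $\cat T^c$, where the homological primes live, and the ambient category $\cat T$ via pure-injectivity — is the central technical difficulty, and is where the finer machinery of \cite{bhs1} is likely to be needed.
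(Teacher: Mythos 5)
Your Key Lemma, $\Supph_{\cat T}(g_\frakp \otimes t) = \phi^{-1}(\{\frakp\}) \cap \Supph_{\cat T}(t)$, is correct, and your proof of it works, though it is more laborious than necessary: you only allow yourself the tensor-product formula for \emph{compact} $\otimes$-factors, but Balmer's tensor-product theorem \cite[Thm.~1.2(d)]{Balmer20_bigsupport} holds for arbitrary objects, so once one knows $\Supph_{\cat T}(g_\frakp)=\phi^{-1}(\{\frakp\})$ (which is the paper's \cref{cor:supphgamma}, proved by exactly your decomposition $g_\frakp = e_{Y_1}\otimes f_{Y_2}$) the Key Lemma is a one-line consequence. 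Granting the Key Lemma, your proofs of part (a), of $(1)\Rightarrow(2)$, and of $(3)\Rightarrow(1)$ under the detection hypothesis are all correct and essentially identical to the paper's (\cref{prop:detection}, \cref{prop:supph3}, \cref{cor:supph1}).

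The genuine gap is $(2)\Rightarrow(3)$, which you explicitly leave open. Your proposed route through the local category $\cat T_\frakp := g_\frakp\otimes\cat T$ is not obviously viable: it is not clear that $\phi^{-1}(\frakp)$ can be identified with $\Spch((\cat T_\frakp)^c)$, because the compacts of $g_\frakp\otimes\cat T$ are not simply $g_\frakp\otimes\cat T^c$ and the passage between Serre $\otimes$-ideals of finitely presented modules over these two small categories requires real work; and even granting that identification, you would still need to show that a one-point $\otimes$-local tt-category has exactly one homological prime, which is far from formal. The paper's argument avoids all of this. The missing ingredient you need is the lemma (\cref{lem:support_injectives} in the paper) that $\Supp_{\cat T}(E_{\cat B}) = \{\phi(\cat B)\}$ for every homological prime $\cat B$. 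With this in hand, $(2)\Rightarrow(3)$ is short: if $\cat B \ne \cat B'$ but $\phi(\cat B)=\phi(\cat B')$, then $E_{\cat B}\otimes E_{\cat B'}=0$ by \cite[Prop.~5.3]{Balmer20_nilpotence}, yet the tensor-product formula (2) together with the lemma would give $\emptyset = \Supp_{\cat T}(E_{\cat B})\cap\Supp_{\cat T}(E_{\cat B'}) \ni \phi(\cat B)$, a contradiction. Note that proving $\Supp_{\cat T}(E_{\cat B})=\{\phi(\cat B)\}$ is itself the technical heart of the section (it uses the half-tensor formula for $\Supp_{\cat T}$ and reduction to the local case via \cite[Cor.~3.6]{Balmer20_nilpotence} and \cite[Prop.~3.11(a)]{bhs1}), and it does not follow from your Key Lemma; so this is the idea your proposal is missing.
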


The proof of \cref{thm:B} will be assembled at the end of \cref{sec:comparasion}. In \cref{sec:examples}, we turn to specific applications. In particular, we show that the converse of (a) as well as the implication $(3) \Rightarrow (1)$ in \cref{thm:B} fail in general if $\Supph_{\cat T}$ does not detect trivial objects. In fact, based on work of Neeman \cite{Neeman00}, we exhibit an example of a tt-category $\cat T$ for which the homological and the tensor triangular spectrum coincide, but which contains objects $t \in \cat T$ with $\Supph_{\cat T}(t) \subsetneq \Supp_{\cat T}(t)$.  Furthermore, we use our results to clarify the relation between different notions of support for derived categories of non-noetherian commutative rings, in derived algebra, and in chromatic homotopy theory. 

Balmer's conjecture remains open in full generality.  Either answer would be interesting.  A counterexample would provide the possibility that the homological spectrum could serve as a home for theories of support which have a better chance of classifying localizing tensor-ideals --- at least in some non-noetherian settings.  A positive answer would unify the two kinds of ``spectra'' for tensor triangulated categories introduced by Balmer and establish in full generality the equality of two quite different approaches to defining support.  Our results demonstrate that a counterexample to Balmer's conjecture must necessarily come from non-stratified categories.

\subsection*{Acknowledgements}

%We thank Scott Balchin, Paul Balmer, David Rubinstein and Changhan Zou\btodo{added this} for useful conversations.  In particular, we are grateful for Paul Balmer's suggestion to use the half-tensor formula in the proof of \cref{lem:support_injectives}.
We thank Scott Balchin, Paul Balmer and David Rubinstein for useful conversations.  In particular, we are grateful for Paul Balmer's suggestion to use the half-tensor formula in the proof of \cref{lem:support_injectives}.
We also thank Changhan Zou and an anonymous referee for their careful reading of a previous version of the manuscript.
%reading a previous version.

\medskip
\section{Support for rigidly-compactly generated tt-categories}

In this section, we introduce an abstract notion of support function for arbitrary objects in a rigidly-compactly generated tt-category and give several known examples. 

\begin{Def}\label{defn:supportfunction}
	A \emph{support function} for a rigidly-compactly generated tt-category~$\cat T$ is a pair $(X,\frakS)$ consisting of a set $X$ and a function $\frakS\colon \mathrm{Ob}(\cat T) \to \mathcal{P}(X)$ assigning a subset of $X$ to every object of $\cat T$, subject to the following conditions:
    \begin{enumerate}
        \item \label{it:support-zero} $\frakS(0) = \emptyset$ and $\frakS(\unit)=X$;
        \item $\frakS(\Sigma t) = \frakS(t)$ for  all $t \in \cat T$;
 	\item \label{it:support-triangle} $\frakS(t_3) \subseteq \frakS(t_1) \cup \frakS(t_2)$ for any exact triangle $t_1 \to t_2 \to t_3 \to \Sigma t_1$ in $\cat T$;
        \item $\frakS(\coprod_{i \in I}t_i) = \bigcup_{i \in I}\frakS(t_i)$ for any family $\{t_i\}_{i \in I}$ of objects in $\cat T$;
        \item $\frakS(t_1 \otimes t_2) \subseteq \frakS(t_1) \cap  \frakS(t_2)$ for any $t_1,t_2 \in \cat T$.
    \end{enumerate}
\end{Def}

\begin{Rem}\label{rem:supportdata}
	This notion of support function is inspired by Balmer's definition of support data for an essentially small tt-category \cite{Balmer05a}.  However, the restriction of a support function $(X,\frakS)$ in the sense of \cref{defn:supportfunction} to the full subcategory $\cat T^c$ of compact objects in $\cat T$ is not a support datum in the sense of \cite[Definition~3.1]{Balmer05a}.  In order for this to be the case, we would need to additionally demand that the set~$X$ is equipped with a topology such that the restriction $\frakS{|}_{\cat T^c}$ satisfies:
    \begin{enumerate}
        \item[(f)] $\frakS{|}_{\cat T^c}\colon \cat T^c \to \mathcal{P}(X)$ takes values in closed subsets of $X$;
        \item[(g)] $\frakS(s \otimes t) = \frakS(s) \cap \frakS(t)$ for all $s,t \in \cat T^c$.
    \end{enumerate}
	If that is the case, then by the universal property of the Balmer spectrum \cite[Thm.~3.2]{Balmer05a}, there is a unique morphism of support data $(X,\frakS) \to (\Spc(\cat T^c),\supp_{\cat T^c})$.  In other words, there is a unique continuous map $f:X \to \Spc(\cat T^c)$ such that 
	\[
		f^{-1}(\supp_{\cat T^c}(x)) = \frakS(x)
	\]
	for all compact objects $x \in \cat T^c$. 
\end{Rem}

\begin{Rem}
	We now recall the Balmer--Favi support function which (under some noetherian hypotheses) extends the universal notion of support $\supp_{\cat T^c}$ from $\cat T^c$ to all of $\cat T$.  A more extensive discussion can be found in~\cite{bhs1}.
\end{Rem}
\begin{Def}\label{rem:weakly-noetherian}
	A point $x$ in a spectral space $X$ is said to be \emph{visible} if its closure $\overbar{\{x\}}$ is a Thomason subset of $X$, and is said to be \emph{weakly visible} if there exist two Thomason subsets $Y_1,Y_2 \subseteq X$ such that $\{x\} = Y_1 \cap Y_2^c$. A space $X$ is said to be \emph{weakly noetherian} if every point of $X$ is weakly visible. \end{Def}
    \begin{Rem} This terminology is justified because a visible point is weakly visible and a spectral space is noetherian if and only if each of its points is visible (\cite[Cor.~7.14]{BalmerFavi11}). An example of a spectral space which is not weakly noetherian is the Balmer spectrum of the category of finite $p$-local spectra. On the other hand, the Balmer spectrum of the category of finite rational $G$-spectra is always weakly noetherian but it is not noetherian in general when the compact Lie group $G$ is not finite. Another example of a weakly noetherian space which is not noetherian is the spectrum $\Spec(R)$ of a non-noetherian absolutely flat ring~$R$ (such as an infinite product of fields).
	See \cite[Remark~4.3]{Stevenson14} and \cite[Example~2.5]{bhs1}.
\end{Rem}

\begin{Exa}[Balmer--Favi support]\label{ex:balmer-favi-support}
	Let $\cat T$ be a rigidly-compactly generated tt-category whose spectrum $\Spc(\cat T^c)$ is weakly noetherian.  Under the latter hypothesis, Balmer's universal notion of support $\supp_{\cat T^c}$ for $\cat T^c$ admits an extension to a support function on all of $\cat T$.  This notion of support for big objects was introduced by Balmer--Favi in \cite[Section 7]{BalmerFavi11}.  They construct for every (weakly visible) point $\cat P \in \Spc(\cat T^c)$ a $\otimes$-idempotent $\kappaP \in \cat T$ and then define
	\[
		\Supp_{\cat T}(t) \coloneqq \{\cat P \in \Spc(\cat T^c) \mid \kappaP \otimes t \neq 0 \}
	\]
	for any $t \in \cat T$.  See \cite[Section~2]{bhs1} for details. This defines a support function $\Supp_{\cat T}$ for $\cat T$ taking values in $\Spc(\cat T^c)$ with the property that $\Supp_{\cat T}{|}_{\cat T^c} = \supp_{\cat T^c}$.  This was shown in \cite[Proposition~7.17]{BalmerFavi11} under the hypothesis that $\Spc(\cat T^c)$ is noetherian, while the general case was established in Remark 2.12 and Lemma 2.13 of \cite{bhs1}.
\end{Exa}

\begin{Rem}
	For our purposes, the next most significant example is the homological support function introduced by Balmer \cite{Balmer20_bigsupport}.  We briefly recall the construction; more details can be found in \cite{Balmer20_bigsupport} and \cite{BalmerKrauseStevenson19}.
\end{Rem}

\begin{Exa}[Homological support]\label{ex:supph}
	Let $\cat T$ be a rigidly-compactly generated tt-category. No assumptions on $\Spc(\cat T^c)$ are required. Let $h \colon \cat T \to \MTc$ denote the restricted Yoneda functor from~$\cat T$ to the Grothendieck abelian category $\cat A \coloneqq \MTc$ of right $\cat T^c$-modules, i.e., additive functors $M \colon (\cat T^c)\op \to \Ab$.
	Let $\cat A^{\fp}$ denote the full subcategory of $\cat A$ consisting of the finitely presented modules.  Every Serre $\otimes$-ideal $\cat B \subseteq \cat A^{\fp}$ generates a localizing Serre $\otimes$-ideal $\langle \cat B\rangle$ of $\cat A$, and we can consider the Gabriel quotient $ \cat A/\langle \cat B \rangle$.  We let $\overbar{h}_{\cat B} \colon \cat T \to \cat A/\langle \cat B \rangle$ denote the composite $\cat T \to \cat A \twoheadrightarrow \cat A/\langle \cat B \rangle $.  As explained in \cite[Sections 2--3]{BalmerKrauseStevenson19}, there is a corresponding pure-injective object $E_{\cat B} \in \cat T$ such that 
	\[
		\langle \cat B \rangle = \Ker(h(E_{\cat B}) \otimes -).
	\]
	Moreover, for $t \in \cat T$, we have $t \otimes E_{\cat B} = 0$ if and only if $\overbar{h}_{\cat B}(t) = 0$.  The \emph{homological spectrum} $\Spch(\cat T^c)$ is the set of maximal Serre $\otimes$-ideals $\cat B \subset \cat{A}^{\fp}$.  Its points are the \emph{homological primes} of $\cat T^c$.  The homological support $\Supph_{\cat T}(\cat T) \subseteq \Spch(\cat T^c)$ of an object $t \in \cat T$ is defined by
	\[
		\Supph_{\cat T}(t) \coloneqq \SET{ \cat B \in \Spch(\cat T^c) }{ [t,E_{\cat B}] \neq 0}
	\]
	where $[-,-]$ denotes the internal hom. By \cite[Thm.~2.1]{Balmer20_bigsupport}, the homological support $\Supph_{\cat T}$ defines a support function for $\cat T$.  We also equip $\Spch(\cat T^c)$ with a topology by taking the homological supports of compact objects as a basis of closed sets (see \cite[Rem.~3.4]{Balmer20_nilpotence}).
\end{Exa}

\begin{Rem}\label{rem:naivesupph}
	There is also a notion of `naive' homological support (see \cite[Rem.~4.6]{Balmer20_nilpotence}) defined by testing with $-\otimes E_{\cat B}$ rather than with $[-,E_{\cat B}]$.  By \cite[Prop.~3.10]{Balmer20_bigsupport}, $[t,E_{\cat B}] \neq 0$ implies $t \otimes E_{\cat B} \neq 0$ for any $t \in \cat T$, so the `naive' homological support contains the homological support.  It is an open question whether these two notions of homological support coincide in general (but see \cref{rem:naive_support} below).
\end{Rem}

\begin{Exa}[BIK support]\label{ex:bik}
	Another prominent class of support functions arise from the action of a
	(graded) commutative noetherian ring $R$ on a compactly generated
	tt-category.  In the presence of such an action, Benson, Iyengar and Krause
	\cite{BensonIyengarKrause08} have constructed a support function
	$\Supp_{R}$ for $\cat T$ which takes values in the (homogeneous) Zariski
	spectrum $\Spec(R)$,
	or rather the subset $\Supp_{R}(\unit) \subseteq \Spec(R)$. 
	We refer to \cite{BensonIyengarKrause08} for the
	details.  If $\cat T$ is stratified by the action of~$R$ in the sense of
	\cite{BensonIyengarKrause11b} then 
	%$\Spec(R) \cong \Spc(\cat T^c)$ 
	$\Supp_{R}(\unit) \cong \Spc(\cat T^c)$
	and
	under this identification the BIK notion of support coincides with the
	Balmer--Favi notion of support: $\Supp_R = \Supp_{\cat T}$.  This is
	established in \cite[Cor.~7.11]{bhs1}.\end{Exa}

\begin{Exa}[Bousfield lattice support]
	In \cite{IyengarKrause13}, Iyengar and Krause use the Bousfield lattice of
	a rigidly-compactly generated tt-category $\cat T$ to construct another
	support function $\Supp_{\text{BL}}\colon \cat T \to \mathrm{Sp}(\cat T)$.
	The target $\Sp(\cat T)$ is the space corresponding via Stone duality to
	the distributive lattice of idempotent Bousfield classes of $\cat T$. The
	verification that  $\Supp_{\text{BL}}$ defines a support function is \cite[Prop.~6.3]{IyengarKrause13}.
	Since the
	Bousfield class of any compact object is idempotent, $\Supp_{\text{BL}}$
	satisfies Condition~(f) of \cref{rem:supportdata} as well.  It follows
	that, up to passage to the opposite topology on $\mathrm{Sp}(\cat T)$, the
	restriction of $\Supp_{\text{BL}}$ to $\cat T^c$ is a support datum in the
	sense of Balmer; cf.~\cite[Prop.~7.9]{IyengarKrause13}. \end{Exa}

\section{Comparison with homological support}\label{sec:comparasion}

In this section we study the relationship between the Balmer--Favi notion of support (\cref{ex:balmer-favi-support}) and Balmer's homological support (\cref{ex:supph}) for big objects in a rigidly-compactly generated tt-category $\cat T$. In the next section, we will show that the two notions agree whenever $\cat T$ is stratified (see \cref{thm:homological}). Our approach to this result is not geodesic, however, as we include some partial comparison results along the way.

\begin{Hyp}
	Throughout this section we assume that $\cat T$ is a rigidly-compactly generated tt-category whose spectrum $\Spc(\cat T^c)$ is weakly noetherian (\cref{rem:weakly-noetherian}).
\end{Hyp}
\begin{Def}\label{ter:suppproperties}
	We say that a support datum $(X,\frakS)$ for $\cat T$
    \begin{enumerate}
        \item has the detection property if, for any $t \in \cat T$, $\frakS(t) = \emptyset$ implies $t = 0$;
        \item satisfies the tensor product formula if $\frakS(s \otimes t) = \frakS(s) \cap \frakS(t)$ for any $s,t \in \cat T$.
    \end{enumerate}
\end{Def}

\begin{Rem}\label{rem:tensor-nilpotence}
	In the presence of non-trivial $\otimes$-nilpotent objects in $\cat T$, a support function cannot satisfy both the detection property and the tensor product formula, because
	\[
		\emptyset = \frakS(t^{\otimes n}) = \frakS(t)
	\]
	forces $t = 0$ for any $t\in \cat T$ and all $n \ge 1$.
\end{Rem}

\begin{Rem}\label{rem:supph-tensor-product}
	By \cite[Thm.~1.2]{Balmer20_bigsupport}, the homological support (\cref{ex:supph}) always satisfies the tensor product formula. 
\end{Rem}

\begin{Rem}\label{rem:idempotents}
	Recall from \cite{BalmerFavi11} that smashing localizations of $\cat T$ correspond to 
%We recall some terminology from \cite{BalmerFavi11}:
%Smashing localizations of $\cat T$ correspond to 
idempotent triangles in $\cat T$; that is, exact triangles
        \[
            e \to \unit \to f \to \Sigma e
        \]
    with the property that $e \otimes f=0$.  It follows that the objects $e$ and $f$ are 
%	\mbox{$\otimes$-idempotents}
	tensor-idempotents
	($e\otimes e \cong e$ and $f \otimes f \cong f$) and that the functor $f \otimes - \colon \cat T \to \cat T$ is a smashing localization.
%Smashing localizations of 
%%a rigidly compactly generated category~$\cat T$
%$\cat T$
%correspond to the \emph{idempotent triangles} in $\cat T$; that is, exact triangles
%        \[
%            e \to \unit \to f \to \Sigma e
%        \]
%    with the property that $e \otimes f=0$.  It follows that the objects $e$ and $f$ are tensor-idempotents ($e\otimes e \simeq e$ and $f \otimes f \simeq f$) and that the functor $f \otimes - \colon \cat T \to \cat T$ is a smashing localization.
%
	For example, 
    given a Thomason subset $Y \subseteq \Spc(\cat T^c)$, with corresponding thick $\otimes$-ideal $\cat T^c_Y = \SET{ x\in \cat T^c}{\supp_{\cat T^c}(x) \subseteq Y}$, there is an associated idempotent triangle 
        \[ 
            e_Y \to \unit \to f_Y \to \Sigma e_Y
        \]
    in $\cat T$ such that $\Ker(f_Y \otimes -)=e_Y \otimes \cat T = \Loco{e_Y} = \Loc\langle \cat T^c_Y\rangle$. 
\end{Rem}

\begin{Rem}\label{rem:homological_spectrum_map}
	There is a continuous map
	\[
		\begin{split}
		\phi \colon \Spch(\cat T^c) &\to \Spc(\cat T^c) \\
		\cat B &\mapsto  h^{-1}(\cat B)
		\end{split}
	\] 
	constructed in \cite[Rem.~3.4]{Balmer20_nilpotence}. Since $\cat T^c$ is rigid, the map $\phi$ is surjective by \cite[Cor.~3.9]{Balmer20_nilpotence}.  Given the evidence collected in \cite[Sec.~5]{Balmer20_nilpotence}, it took Balmer ``nerves of steel not to conjecture'' that~$\phi$ is in fact a bijection \cite[Rem.~5.15]{Balmer20_nilpotence}.
\end{Rem}
\begin{Lem}\label{lem:support_injectives}
	If $\cat B \in \Spch(\cat T^c)$ then $\Supp_{\cat T}(E_{\cat B}) = \{ \phi(\cat B)\}$.
\end{Lem}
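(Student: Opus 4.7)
The plan is to compute $\kappaP \otimes E_{\cat B}$ by exploiting the decomposition $\kappaP = e_{Y_1} \otimes f_{Y_2}$ coming from a weakly visible presentation $\{\cat P\} = Y_1 \cap Y_2^c$ (with $Y_1, Y_2 \subseteq \Spc(\cat T^c)$ Thomason), thereby reducing the lemma to analyzing how $E_{\cat B}$ interacts with the Balmer--Favi idempotents $e_Y$ and $f_Y$ attached to a Thomason subset~$Y$. The starting observation, immediate from the defining property of $E_{\cat B}$ and the description of $\phi$ in \cref{rem:homological_spectrum_map}, is that for any compact $x \in \cat T^c$,
\[
    x \otimes E_{\cat B} = 0 \iff h(x) \in \cat B \iff x \in \phi(\cat B) \iff \phi(\cat B) \notin \supp(x).
\]

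The first key sub-claim is: for any Thomason $Y$, $e_Y \otimes E_{\cat B} = 0$ if and only if $\phi(\cat B) \notin Y$. If $\phi(\cat B) \notin Y$, the displayed equivalences show that every compact $x \in \cat T^c_Y$ lies in $\Ker(- \otimes E_{\cat B})$; since this kernel is a localizing $\otimes$-ideal, it contains $\Loc\langle \cat T^c_Y\rangle = \Loc\langle e_Y\rangle$, so $e_Y \otimes E_{\cat B} = 0$. Conversely, if $\phi(\cat B) \in Y$, then by Thomasonness there is a compact $x$ with $\phi(\cat B) \in \supp(x) \subseteq Y$, whence $x \otimes E_{\cat B} \neq 0$; the relation $e_Y \otimes x \cong x$ (valid since $x \in \Loc\langle e_Y\rangle$) then promotes this to $e_Y \otimes E_{\cat B} \neq 0$.

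The second key sub-claim is the analogue for $f_Y$: $f_Y \otimes E_{\cat B} = 0$ if and only if $\phi(\cat B) \in Y$. One direction is easy: if $\phi(\cat B) \notin Y$, then tensoring the idempotent triangle $e_Y \to \unit \to f_Y$ with $E_{\cat B}$ and applying the first sub-claim gives $f_Y \otimes E_{\cat B} \cong E_{\cat B} \neq 0$. The converse, namely that $\phi(\cat B) \in Y$ forces $f_Y \otimes E_{\cat B} = 0$, is the main technical step; this is where I would invoke Balmer's half-tensor formula (per the acknowledgements), in a form such as $c \otimes [t, E_{\cat B}] \cong [t, c \otimes E_{\cat B}]$ for compact $c$, to reduce the vanishing of $f_Y \otimes E_{\cat B}$ to a test against compacts, leveraging the absorption property from the first sub-claim.

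With both vanishing criteria established, the lemma follows by case analysis on $\cat P$. If $\cat P = \phi(\cat B)$, then $\phi(\cat B) \in Y_1$ and $\phi(\cat B) \notin Y_2$, so $f_{Y_2} \otimes E_{\cat B} \cong E_{\cat B}$, and hence $\kappaP \otimes E_{\cat B} \cong e_{Y_1} \otimes E_{\cat B}$ which is nonzero. If instead $\cat P \neq \phi(\cat B)$, then $\phi(\cat B) \notin Y_1 \cap Y_2^c$, so either $\phi(\cat B) \notin Y_1$ (whence $e_{Y_1} \otimes E_{\cat B} = 0$) or $\phi(\cat B) \in Y_2$ (whence $f_{Y_2} \otimes E_{\cat B} = 0$); in either case $\kappaP \otimes E_{\cat B} = 0$. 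The primary obstacle throughout is the converse direction of the second sub-claim, which is exactly what the half-tensor formula is invoked to handle.
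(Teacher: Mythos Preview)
Your architecture is sound, your first sub-claim is correctly proved, and the final case analysis is valid; the gap is precisely where you locate it, but the tool you propose does not close it. The ``half-tensor formula'' of the acknowledgements is the identity $\Supp_{\cat T}(t \otimes x) = \Supp_{\cat T}(t) \cap \supp(x)$ for compact~$x$ (\cite[Thm.~7.22]{BalmerFavi11}, \cite[Lem.~2.18]{bhs1}), and the paper deploys it for a \emph{different} purpose: to rule out primes $\cat Q \notin \gen(\phi(\cat B))$ by exhibiting a compact $x$ with $x \otimes E_{\cat B} = 0$ and $\cat Q \in \supp(x)$. Your formula $c \otimes [t, E_{\cat B}] \cong [t, c \otimes E_{\cat B}]$ is a separate dualizability identity, and it does not reduce the vanishing of $f_Y \otimes E_{\cat B}$ to anything testable on compacts: the point is that $e_Y$ lies in a localizing ideal generated by compacts (which is why your first sub-claim works), whereas $f_Y$ is a quotient by such an ideal and admits no analogous generation statement to exploit.

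What actually establishes the hard direction of your second sub-claim is reduction to the local category at $\cat P = \phi(\cat B)$ followed by \cite[Cor.~4.14]{BalmerKrauseStevenson19}. By \cite[Cor.~3.6]{Balmer20_nilpotence} the functor $\overbar{h}_{\cat B}$ factors through $\cat T \to \cat T_{\cat P}$, after which ``$\phi(\cat B) \in Y$'' becomes ``$Y$ non-empty'' and the cited result gives $\overbar{h}_{\cat B}(f_Y) = 0$ directly. This is exactly the machinery the paper uses for primes in $\gen(\phi(\cat B))$. With it in hand, your uniform case analysis does go through (and in fact bypasses the genuine half-tensor formula entirely); without it, or a substitute of comparable strength, the argument does not close.
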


\begin{proof}
	Let $\cat P := \phi(\cat B) \in \Spc(\cat T^c)$.  We claim that $\kappaP \otimes E_{\cat B} \neq 0$ and ${\kappaQ \otimes E_{\cat B} = 0}$ for $\cat Q \ne \cat P$.  Equivalently (by the discussion in \cref{ex:supph}), $\overbar{h}_{\cat B}(\kappaP) \ne 0$ and $\overbar{h}_{\cat B}(\kappaQ) = 0$ for $\cat Q \neq \cat P$.

	We first show that $\overbar{h}_{\cat B}(\kappaP) \ne 0$. By \cite[Cor.~3.6]{Balmer20_nilpotence} (applied with $\cat J = \cat P$) and \cite[Prop.~3.11(a)]{bhs1} we can assume that $\cat T$ is local and that $\cat P = \frak m$ is the unique closed point of $\Spc(\cat T^c)$. In this case, $\kappam \simeq e_{\frak m}$. Moreover, $\overbar{h}_{\cat B}(e_{\frak m}) \ne 0$ follows from $\overbar{h}_{\cat B}(f_{\frak m}) = 0$, which is shown in \cite[Cor.~4.14]{BalmerKrauseStevenson19}. %It may be helpful to keep \cite[Rem.~3.10]{Balmer20_nilpotence} in mind.

	Now consider $\cat Q \in \gen(\cat P) \setminus \{\cat P\}$, where $\gen(\cat P)$ denotes the set of generalizations of $\cat P$.
	%Now consider $\cat Q \in \gen(\cat P) \setminus \{\cat P\}$.
	We claim that $\overbar{h}_{\cat B}(\kappaQ) = 0$.  Again using \cite[Cor.~3.6]{Balmer20_nilpotence} and \cite[Prop.~3.11(a)]{bhs1} we can assume that $\cat T$ is local and that $\cat Q \ne \frak m$. In this case, $\kappaQ = e_{Y_1} \otimes f_{Y_2}$ for Thomason subsets $Y_1,Y_2$ such that $\{ \cat Q \} = Y_1 \cap Y_2^c$, and moreover $Y_2 \ne \emptyset$. In particular, applying  \cite[Cor.~4.14]{BalmerKrauseStevenson19} we have $\overbar{h}_{\cat B}(f_{ Y_{2}}) = 0$. Because $\overbar{h}_{\cat B}$ is a monoidal functor, we therefore have $\overbar{h}_{\cat B}(\kappaQ) = 0$ as well, as required.

	Finally, we prove that if $\cat Q \in \Supp_{\cat T}(E_{\cat B})$ then $\cat Q \in \gen(\cat P)$.  To this end, consider $\cat Q \notin \gen(\cat P)$, i.e., a prime ideal $\cat Q$ with $\cat P \nsubseteq \cat Q$.  Let $x \in \cat P \setminus \cat Q$ a compact object of~$\cat T$, so that $\cat P \notin \supp_{\cat T}(x)$ and $\cat Q \in \supp_{\cat T}(x)$.  Since $\phi^{-1}(\supp_{\cat T}(x)) = \Supph_{\cat T}(x)$ for compact objects, we deduce that $\cat B \notin \Supph_{\cat T}(x)$ and therefore $E_{\cat B} \otimes x = 0$.  (Here we use that the `naive' homological support and the homological support agree on compact objects \cite[Prop.~4.4]{Balmer20_bigsupport}).  Since $\Supp_{\cat T}$ satisfies the half-tensor product formula (see \cite[Thm.~7.22]{BalmerFavi11} for the noetherian case and \cite[Lem.~2.18]{bhs1} in general), we then have
	\[
		\emptyset = \Supp_{\cat T}(E_{\cat B} \otimes x) = \Supp_{\cat T}(E_{\cat B}) \cap \Supp_{\cat T}(x).
	\]
	Because $\cat Q \in \supp_{\cat T}(x) = \Supp_{\cat T}(x)$ by assumption, this gives $\cat Q \notin \Supp_{\cat T}(E_{\cat B})$, which finishes the proof.
\end{proof}

\begin{Lem}\label{lem:suppheY}
	For any Thomason subset $Y \subseteq \Spc(\cat T^c)$, we have
	\[
		\Supph_{\cat T}(e_Y) = \phi^{-1}(Y) \qquad\text{ and }\qquad \Supph_{\cat T}(f_Y) = \phi^{-1}(Y^c).
	\]
\end{Lem}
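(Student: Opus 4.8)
The plan is to first pin down $\Supph_{\cat T}(e_Y)$ and then read off $\Supph_{\cat T}(f_Y)$ by complementation. The idempotent triangle $e_Y \to \unit \to f_Y \to \Sigma e_Y$ of \cref{rem:idempotents} has $e_Y \otimes f_Y = 0$, so the tensor product formula of \cref{rem:supph-tensor-product} gives $\Supph_{\cat T}(e_Y) \cap \Supph_{\cat T}(f_Y) = \Supph_{\cat T}(0) = \emptyset$, while the triangle axiom for the support function $\Supph_{\cat T}$ gives $\Supph_{\cat T}(e_Y) \cup \Supph_{\cat T}(f_Y) \supseteq \Supph_{\cat T}(\unit) = \Spch(\cat T^c)$. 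Hence $\Supph_{\cat T}(f_Y) = \Spch(\cat T^c) \smallsetminus \Supph_{\cat T}(e_Y)$, and once $\Supph_{\cat T}(e_Y) = \phi^{-1}(Y)$ is established the second identity follows from $\phi^{-1}(Y)^c = \phi^{-1}(Y^c)$.

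To compute $\Supph_{\cat T}(e_Y)$ I would work with the thick $\otimes$-ideal $\cat T^c_Y = \SET{x \in \cat T^c}{\supp_{\cat T^c}(x) \subseteq Y}$, recalling from \cref{rem:idempotents} that $\Ker(f_Y \otimes -) = e_Y \otimes \cat T = \Loc\langle \cat T^c_Y\rangle$, and that $Y = \bigcup_{x \in \cat T^c_Y}\supp_{\cat T^c}(x)$ since $Y$ is Thomason. For the inclusion $\phi^{-1}(Y) \subseteq \Supph_{\cat T}(e_Y)$: every $x \in \cat T^c_Y$ lies in $\Ker(f_Y\otimes -)$, so $f_Y \otimes x = 0$ and therefore $e_Y \otimes x \simeq x$; hence $\Supph_{\cat T}(x) = \Supph_{\cat T}(e_Y \otimes x) \subseteq \Supph_{\cat T}(e_Y)$, and taking the union over $x \in \cat T^c_Y$ while using $\Supph_{\cat T}(x) = \phi^{-1}(\supp_{\cat T^c}(x))$ for compact $x$ yields $\phi^{-1}(Y) = \bigcup_{x \in \cat T^c_Y}\phi^{-1}(\supp_{\cat T^c}(x)) \subseteq \Supph_{\cat T}(e_Y)$.

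For the reverse inclusion $\Supph_{\cat T}(e_Y) \subseteq \phi^{-1}(Y)$ I would argue by contraposition. Suppose $\phi(\cat B) \notin Y$; then $\phi(\cat B) \notin \supp_{\cat T^c}(x)$ for every $x \in \cat T^c_Y$, so $\cat B \notin \Supph_{\cat T}(x)$, and since the naive and the genuine homological support agree on compact objects (\cite[Prop.~4.4]{Balmer20_bigsupport}) this means $x \otimes E_{\cat B} = 0$. Thus $\cat T^c_Y \subseteq \Ker(- \otimes E_{\cat B})$; as $\Ker(-\otimes E_{\cat B})$ is a localizing subcategory of $\cat T$ it contains $\Loc\langle \cat T^c_Y\rangle \ni e_Y$, so $e_Y \otimes E_{\cat B} = 0$. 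By \cite[Prop.~3.10]{Balmer20_bigsupport} (see \cref{rem:naivesupph}) this forces $[e_Y, E_{\cat B}] = 0$, i.e.\ $\cat B \notin \Supph_{\cat T}(e_Y)$, as required.

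The main obstacle is the interplay between the two flavours of homological support: $\Supph_{\cat T}$ is defined via the internal hom $[-, E_{\cat B}]$, whereas the localization-theoretic argument above naturally produces only the \emph{naive} vanishing $e_Y \otimes E_{\cat B} = 0$. Bridging this gap is exactly the role of \cite[Prop.~3.10]{Balmer20_bigsupport}, whose implication $t \otimes E_{\cat B} = 0 \Rightarrow [t, E_{\cat B}] = 0$ holds for arbitrary $t \in \cat T$; combined with the agreement of the two notions on compacts, this is what makes the argument close, since otherwise one would only control the naive homological support of $e_Y$. The remaining inputs — the existence and basic properties of $e_Y$ and $f_Y$, and the identity $\Supph_{\cat T}{|}_{\cat T^c} = \phi^{-1}(\supp_{\cat T^c})$ — are standard.
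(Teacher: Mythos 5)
Your proposal is correct and follows essentially the same route as the paper: compute $\Supph_{\cat T}(e_Y)=\phi^{-1}(Y)$ using $\Loco{e_Y}=\Loco{\cat T^c_Y}$ and the formula $\Supph{|}_{\cat T^c}=\phi^{-1}\circ\supp$, then read off $\Supph_{\cat T}(f_Y)$ by complementation from surjectivity of $\phi$ and the tensor-product theorem. The only difference is granularity: where the paper invokes the formal properties of homological support from \cite[Prop.~4.3 and Thm.~4.5]{Balmer20_bigsupport} to write $\Supph_{\cat T}(\Loco{e_Y})=\bigcup_{x\in\cat T^c_Y}\Supph_{\cat T}(x)$ in one line, you reprove the hard inclusion $\Supph_{\cat T}(e_Y)\subseteq\phi^{-1}(Y)$ by hand via the contraposition argument through $\Ker(-\otimes E_{\cat B})$, the agreement of naive and genuine support on compacts, and \cite[Prop.~3.10]{Balmer20_bigsupport}; this is precisely the content being delegated to the cited results, so the two proofs are mathematically the same.
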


\begin{proof}
	Recall that $\Supph(x) = \phi^{-1}(\supp(x))$ for any compact object $x \in \cat T^c$ (see \cite[Prop.~4.4]{Balmer20_bigsupport}).  Also recall that $Y = \bigcup_{x \in \cat T^c_Y} \supp(x)$ where 
	\[
		\cat T^c_Y = \SET{x \in \cat T^c}{\supp(x)\subseteq Y}
	\]
	and $\Loco{e_Y} = \Loco{\cat T^c_Y}$. The formal properties of homological support (\cite[Prop.~4.3 and Thm.~4.5]{Balmer20_bigsupport}) then imply 
	\begin{align*}
		\Supph_{\cat T}(e_Y)
			= \Supph_{\cat T}(\Loco{e_Y}) 
		 	= \bigcup_{x \in \cat T^c_Y} \Supph_{\cat T}(x)
		 	= \phi^{-1}(Y).
	\end{align*}
	Moreover, since $\phi$ is surjective, $\Spch(\cat T^c) =\phi^{-1}(\supp(\unit)) = \Supph_{\cat T}(\unit)$.  Hence $\Spch(\cat T^c) = \Supph_{\cat T}(e_Y) \cup \Supph_{\cat T}(f_Y)$.  By the tensor-product theorem (\cref{rem:supph-tensor-product}), $\Supph_{\cat T}(e_Y) \cap \Supph_{\cat T}(f_Y) = \Supph_{\cat T}(e_Y \otimes f_Y) = \emptyset$ so that $\Supph_{\cat T}(f_Y) = \Supph_{\cat T}(e_Y)^c = \phi^{-1}(Y)^c = \phi^{-1}(Y^c)$.
\end{proof}

\begin{Cor}\label{cor:supphgamma}
	For any $\cat P \in \Spc(\cat T^c)$, we have $\phi^{-1}(\{\cat P\}) = \Supph_{\cat T}(\kappaP)$.
\end{Cor}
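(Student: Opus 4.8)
The plan is to express $\kappaP$ in terms of the idempotents $e_Y$ and $f_Y$ and then combine the tensor product formula for homological support with \cref{lem:suppheY}.

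First I would recall the construction of $\kappaP$ from \cref{ex:balmer-favi-support}: since $\Spc(\cat T^c)$ is weakly noetherian, the point $\cat P$ is weakly visible, so there are Thomason subsets $Y_1,Y_2\subseteq\Spc(\cat T^c)$ with $\{\cat P\}=Y_1\cap Y_2^c$, and by definition $\kappaP\simeq e_{Y_1}\otimes f_{Y_2}$ (this object being independent of the chosen $Y_1,Y_2$ by the results recalled in \cite{bhs1}). Next, the homological support satisfies the tensor product formula by \cref{rem:supph-tensor-product}, hence
\[
	\Supph_{\cat T}(\kappaP)=\Supph_{\cat T}(e_{Y_1}\otimes f_{Y_2})=\Supph_{\cat T}(e_{Y_1})\cap\Supph_{\cat T}(f_{Y_2}).
\]
Finally, applying \cref{lem:suppheY} to each factor gives $\Supph_{\cat T}(e_{Y_1})=\phi^{-1}(Y_1)$ and $\Supph_{\cat T}(f_{Y_2})=\phi^{-1}(Y_2^c)$, so that
\[
	\Supph_{\cat T}(\kappaP)=\phi^{-1}(Y_1)\cap\phi^{-1}(Y_2^c)=\phi^{-1}(Y_1\cap Y_2^c)=\phi^{-1}(\{\cat P\}),
\]
as claimed.

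There is essentially no obstacle here: the corollary is a formal consequence of \cref{lem:suppheY} together with the tensor product formula, once one unwinds the definition of the Balmer--Favi idempotent $\kappaP$. The only point requiring minor care is that the expression $\kappaP\simeq e_{Y_1}\otimes f_{Y_2}$ does not depend on the choice of Thomason subsets realizing $\{\cat P\}$ as $Y_1\cap Y_2^c$, but this is already part of the well-definedness of $\kappaP$ established in \cite{BalmerFavi11} and \cite{bhs1}, so it may simply be cited.
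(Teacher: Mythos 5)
Your proof is correct and follows exactly the same route as the paper: unwind $\kappaP \simeq e_{Y_1} \otimes f_{Y_2}$, apply the tensor product formula for $\Supph_{\cat T}$ (\cref{rem:supph-tensor-product}), and finish with \cref{lem:suppheY}. The extra remark on well-definedness of $\kappaP$ is fine but not needed for the argument.
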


\begin{proof}
	If $\{\cat P\} = Y_1 \cap Y_2^c$ then $\kappaP = e_{Y_1} \otimes f_{Y_2}$ and hence 
	\[
		\Supph_{\cat T}(\kappaP) = \Supph_{\cat T}(e_{Y_1}) \cap \Supph_{\cat T}(f_{Y_2}) = \phi^{-1}(Y_1) \cap \phi^{-1}(Y_2^c) = \phi^{-1}(\{\cat P\})
	\]
	by the tensor-product theorem (\cref{rem:supph-tensor-product}) and \cref{lem:suppheY}.
\end{proof}

\begin{Prop}\label{prop:detection}
	For any $t \in \cat T$, we have 
	\begin{equation*}
		\phi(\Supph_{\cat T}(t)) \subseteq \Supp_{\cat T}(t).
	\end{equation*}
	In particular, if $\Supph_{\cat T}$ has the detection property then $\Supp_{\cat T}$ also has the detection property.
\end{Prop}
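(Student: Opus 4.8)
The plan is to establish the containment $\phi(\Supph_{\cat T}(t)) \subseteq \Supp_{\cat T}(t)$ pointwise. So fix $\cat B \in \Supph_{\cat T}(t)$ and set $\cat P := \phi(\cat B)$; I want to show $\cat P \in \Supp_{\cat T}(t)$, i.e.\ $\kappaP \otimes t \neq 0$. First I would record that $\cat B \in \Supph_{\cat T}(t)$ means, by definition, $[t, E_{\cat B}] \neq 0$, and by \cref{rem:naivesupph} this implies the naive homological support condition $t \otimes E_{\cat B} \neq 0$.

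The key input is \cref{lem:support_injectives}, which computes $\Supp_{\cat T}(E_{\cat B}) = \{\cat P\}$. Combined with the half-tensor product formula for Balmer--Favi support (\cite[Thm.~7.22]{BalmerFavi11} in the noetherian case, \cite[Lem.~2.18]{bhs1} in general), I get
\[
	\Supp_{\cat T}(t \otimes E_{\cat B}) = \Supp_{\cat T}(t) \cap \Supp_{\cat T}(E_{\cat B}) = \Supp_{\cat T}(t) \cap \{\cat P\}.
\]
Now $t \otimes E_{\cat B} \neq 0$, and since $\Supp_{\cat T}$ is a support function for a rigidly-compactly generated tt-category with weakly noetherian spectrum it detects trivial objects (this is part of the Balmer--Favi package, \cite[Thm.~7.22]{BalmerFavi11} / \cite{bhs1}), so $\Supp_{\cat T}(t \otimes E_{\cat B}) \neq \emptyset$. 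Hence the intersection $\Supp_{\cat T}(t) \cap \{\cat P\}$ is nonempty, forcing $\cat P \in \Supp_{\cat T}(t)$, which is exactly what we wanted.

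For the ``in particular'' clause: suppose $\Supph_{\cat T}$ has the detection property and let $t \in \cat T$ with $\Supp_{\cat T}(t) = \emptyset$. By the containment just proved, $\phi(\Supph_{\cat T}(t)) \subseteq \Supp_{\cat T}(t) = \emptyset$, hence $\Supph_{\cat T}(t) = \emptyset$ as well, and the detection property for $\Supph_{\cat T}$ gives $t = 0$. Thus $\Supp_{\cat T}$ has the detection property.

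The main obstacle is making sure the two facts about Balmer--Favi support that are being invoked are genuinely available: the half-tensor product formula $\Supp_{\cat T}(t \otimes E_{\cat B}) = \Supp_{\cat T}(t) \cap \Supp_{\cat T}(E_{\cat B})$ (note only one factor, $E_{\cat B}$, need be arbitrary — $t$ can be arbitrary too here since the formula holds with one side in $\cat T$ and the other arbitrary, which is exactly the half-tensor statement), and the detection property of $\Supp_{\cat T}$ on all of $\cat T$ under the weakly noetherian hypothesis. Both are cited in the proof of \cref{lem:support_injectives} already, so I would simply point to the same references; the remaining content is the short diagram of inclusions above.
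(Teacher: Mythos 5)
Your argument contains two genuine gaps, and the paper's proof is arranged specifically to avoid both of them.

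First, you misapply the half-tensor product formula. That formula --- \cite[Thm.~7.22]{BalmerFavi11} in the noetherian case and \cite[Lem.~2.18]{bhs1} in general --- states $\Supp_{\cat T}(t \otimes x) = \Supp_{\cat T}(t) \cap \supp_{\cat T^c}(x)$ for $t \in \cat T$ arbitrary and $x \in \cat T^c$ \emph{compact}. You invoke it for $t \otimes E_{\cat B}$ with both factors arbitrary, but the pure-injective $E_{\cat B}$ is essentially never compact. Your parenthetical claim that ``the formula holds with one side in $\cat T$ and the other arbitrary'' misstates the hypothesis: one side must be compact, not merely arbitrary. (The one place the paper does use the half-tensor formula, inside the proof of \cref{lem:support_injectives}, tensors $E_{\cat B}$ against a \emph{compact} $x$, which is exactly the allowed shape.)

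Second, you invoke the detection property of $\Supp_{\cat T}$ to pass from $t \otimes E_{\cat B} \neq 0$ to $\Supp_{\cat T}(t\otimes E_{\cat B}) \neq \emptyset$, calling it ``part of the Balmer--Favi package.'' It is not. Under the section's standing weakly-noetherian hypothesis, detection for $\Supp_{\cat T}$ amounts to the local-to-global principle, which is a further condition that can fail; this is exactly why \cref{ter:suppproperties} lists detection as a property to be checked and why \cref{cor:supph1} treats it as a hypothesis. More to the point, the ``in particular'' clause of the proposition you are proving is precisely the device for deducing detection of $\Supp_{\cat T}$ from detection of $\Supph_{\cat T}$; assuming it outright makes the argument circular. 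Even setting that aside, without a local-to-global principle one cannot pass from $\Supp_{\cat T}(E_{\cat B})=\{\cat P\}$ to $E_{\cat B}\in\Loco{\kappaP}$, so knowing $t\otimes E_{\cat B}\neq 0$ would still not yield $\kappaP\otimes t\neq 0$.

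The paper's proof avoids both issues by working entirely with $\Supph_{\cat T}$ and the idempotent $\kappaphiB$, rather than with $\Supp_{\cat T}$ and $E_{\cat B}$. Since $\Supph_{\cat T}$ satisfies the \emph{full} tensor-product formula for arbitrary objects (\cref{rem:supph-tensor-product}, citing \cite[Thm.~1.2]{Balmer20_bigsupport}), and since $\cat B \in \Supph_{\cat T}(\kappaphiB)$ by \cref{cor:supphgamma}, one gets $\cat B \in \Supph_{\cat T}(t) \cap \Supph_{\cat T}(\kappaphiB) = \Supph_{\cat T}(\kappaphiB\otimes t)$, hence $\kappaphiB \otimes t \neq 0$, hence $\phi(\cat B) \in \Supp_{\cat T}(t)$. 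This uses only the trivial direction (nonempty support implies nonzero object) and never needs a compactness hypothesis, because the tensor-product formula for $\Supph_{\cat T}$ is unconditional. Your argument for the ``in particular'' clause, on the other hand, is fine once the inclusion is established correctly.
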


\begin{proof}
	For any $\cat B \in \Spch(\cat T^c)$, we have $\cat B \in \phi^{-1}(\{\phi(\cat B)\}) = \Supph_{\cat T}(\kappaphiB)$ by \cref{cor:supphgamma}.  In particular, if $\cat B \in \Supph_{\cat T}(t)$ then
	\[
		\cat B \in \Supph_{\cat T}(\kappaphiB) \cap \Supph_{\cat T}(t) = \Supph_{\cat T}(\kappaphiB \otimes t)
	\]
	by the tensor-product theorem (\cref{rem:supph-tensor-product}). In particular, $\kappaphiB \otimes t \neq 0$ so that $\phi(\cat B) \in \Supp(t)$. Finally, the established inclusion implies that if $\Supp_{\cat T}(t) = \emptyset$ for some $t \in \cat T$, then $\Supph_{\cat T}(t) = \emptyset$ as well.  The detection property for $\Supph_{\cat T}$ would then imply that $t=0$.
\end{proof} 

\begin{Rem}
	As demonstrated in \cref{ex:oddring} below, the inclusion established in \cref{prop:detection} is not always an equality. Moreover, the same example shows that the detection property for $\Supp_{\cat T}$ is not sufficient to guarantee the detection property for $\Supph_{\cat T}$.
\end{Rem}

\begin{Cor}\label{cor:supph1}
    The following conditions are equivalent:
    \begin{enumerate}
        \item The detection property holds for $\Supph_{\cat T}$.
        \item The detection property holds for $\Supp_{\cat T}$ and
			\[
				\phi(\Supph_{\cat T}(t)) = \Supp_{\cat T}(t)
			\]
			 for all $t \in \cat T$.
    \end{enumerate}
\end{Cor}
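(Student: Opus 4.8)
\textbf{Proof proposal for \cref{cor:supph1}.}
The plan is to deduce both implications from the two inputs already in hand: the unconditional inclusion $\phi(\Supph_{\cat T}(t)) \subseteq \Supp_{\cat T}(t)$ and the implication ``detection for $\Supph_{\cat T}$ $\Rightarrow$ detection for $\Supp_{\cat T}$'' from \cref{prop:detection}, together with the tensor-product theorem for homological support (\cref{rem:supph-tensor-product}) and the identification $\phi^{-1}(\{\cat P\}) = \Supph_{\cat T}(\kappaP)$ from \cref{cor:supphgamma}.

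For $(a)\Rightarrow(b)$, assume $\Supph_{\cat T}$ has the detection property. Then $\Supp_{\cat T}$ does too by \cref{prop:detection}, and it remains to upgrade the inclusion $\phi(\Supph_{\cat T}(t)) \subseteq \Supp_{\cat T}(t)$ to an equality. So fix $\cat P \in \Supp_{\cat T}(t)$, i.e.\ $\kappaP \otimes t \neq 0$. By the tensor-product theorem, $\Supph_{\cat T}(\kappaP \otimes t) = \Supph_{\cat T}(\kappaP) \cap \Supph_{\cat T}(t)$, and by detection this set is non-empty; pick $\cat B$ in it. Then $\cat B \in \Supph_{\cat T}(\kappaP) = \phi^{-1}(\{\cat P\})$ by \cref{cor:supphgamma}, so $\phi(\cat B) = \cat P$; and $\cat B \in \Supph_{\cat T}(t)$, so $\cat P = \phi(\cat B) \in \phi(\Supph_{\cat T}(t))$. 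This gives the reverse inclusion $\Supp_{\cat T}(t) \subseteq \phi(\Supph_{\cat T}(t))$, hence equality.

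For $(b)\Rightarrow(a)$, assume $\Supp_{\cat T}$ has the detection property and $\phi(\Supph_{\cat T}(t)) = \Supp_{\cat T}(t)$ for all $t$. If $\Supph_{\cat T}(t) = \emptyset$, then $\Supp_{\cat T}(t) = \phi(\Supph_{\cat T}(t)) = \emptyset$, so $t = 0$ by detection for $\Supp_{\cat T}$; thus $\Supph_{\cat T}$ has the detection property.

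This argument is essentially bookkeeping once \cref{prop:detection,cor:supphgamma} and the tensor-product theorem are available, so I do not anticipate a genuine obstacle; the one point deserving care is that in $(a)\Rightarrow(b)$ one must invoke the tensor-product theorem for $\Supph_{\cat T}$ (valid unconditionally) rather than any hypothetical tensor-product formula for $\Supp_{\cat T}$, which need not hold here.
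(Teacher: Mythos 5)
Your proof is correct and follows essentially the same route as the paper: both directions use \cref{prop:detection} for the unconditional inclusion and the transfer of detection, then \cref{rem:supph-tensor-product} together with \cref{cor:supphgamma} to show $\Supph_{\cat T}(\kappaP \otimes t) = \phi^{-1}(\{\cat P\}) \cap \Supph_{\cat T}(t)$ is nonempty, with the reverse direction being the same immediate bookkeeping. The only cosmetic difference is that you explicitly name a homological prime $\cat B$ in the nonempty intersection, where the paper leaves it implicit.
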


\begin{proof}
	(a)$\Rightarrow$(b): The detection property for $\Supp_{\cat T}$ follows from the detection property for $\Supph_{\cat T}$, as shown in \cref{prop:detection}.  Moreover, in order to establish the equality in~(b) we only need to verify the inclusion $\Supp_{\cat T}(t) \subseteq \phi(\Supph_{\cat T}(t))$, again by \cref{prop:detection}.  To this end, let $\cat P \in \Supp_{\cat T}(t)$, so that $\kappaP \otimes t \neq 0$.  By the assumed detection property for $\Supph_{\cat T}$ we then have
	\[
		\emptyset \neq \Supph_{\cat T}(\kappaP \otimes t) = \Supph_{\cat T}(\kappaP) \cap \Supph_{\cat T}(t) = \phi^{-1}(\{\cat P\}) \cap \Supph_{\cat T}(t)
	\]
	by the tensor product theorem (\cref{rem:supph-tensor-product}) and \cref{cor:supphgamma}. In other words, we have $\cat P \in \phi(\Supph_{\cat T}(t))$, as desired. 

	(b)$\Rightarrow$(a): If $\Supph_{\cat T}(t) = \emptyset$ then $\Supp_{\cat T}(t) = \phi(\Supph_{\cat T}(t)) = \emptyset$ as well, hence $t=0$ by the detection property for $\Supp_{\cat T}$.
\end{proof}

\begin{Prop}\label{prop:tpf-implies-balmerconj}
	If $\Supp_{\cat T}$ satisfies the tensor product formula then the comparison map $\phi\colon \Spch(\cat T^c) \to \Spc(\cat T^c)$ is a bijection. If $\Supph_{\cat T}$ has the detection property then the converse holds.
\end{Prop}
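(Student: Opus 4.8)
The plan is to treat the two implications separately, using throughout that $\phi$ is surjective (\cref{rem:homological_spectrum_map}), so that for the first claim only injectivity is at stake.

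For the first implication, suppose $\Supp_{\cat T}$ satisfies the tensor product formula and, for contradiction, that $\phi(\cat B_1) = \phi(\cat B_2) =: \cat P$ for distinct homological primes $\cat B_1 \ne \cat B_2$. By \cref{lem:support_injectives} we have $\Supp_{\cat T}(E_{\cat B_1}) = \{\cat P\} = \Supp_{\cat T}(E_{\cat B_2})$, so the tensor product formula forces $\Supp_{\cat T}(E_{\cat B_1}\otimes E_{\cat B_2}) = \{\cat P\}$ and in particular $E_{\cat B_1}\otimes E_{\cat B_2}\ne 0$. The contradiction comes from the fact that the pure-injectives attached to distinct homological primes are $\otimes$-orthogonal, i.e.\ $E_{\cat B_1}\otimes E_{\cat B_2} = 0$. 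To see this, recall from \cref{ex:supph} that $t\otimes E_{\cat B} = 0$ if and only if $h(E_{\cat B})\otimes h(t) = 0$ in $\cat A$, so it is enough to show $h(E_{\cat B_1})\otimes h(E_{\cat B_2}) = 0$. The endofunctor $F := h(E_{\cat B_1})\otimes h(E_{\cat B_2})\otimes(-)$ of $\cat A$ is exact (a composite of the exact functors $h(E_{\cat B_i})\otimes(-)$), hence $\Ker F$ is a Serre $\otimes$-ideal of $\cat A$; since $h(E_{\cat B_i})\otimes(-)$ annihilates $\langle\cat B_i\rangle \supseteq \cat B_i$ for $i = 1,2$, the functor $F$ annihilates $\cat B_1\cup\cat B_2$. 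Thus $\Ker F\cap\cat A^{\fp}$ is a Serre $\otimes$-ideal of $\cat A^{\fp}$ containing $\cat B_1\cup\cat B_2$; as $\cat B_1$ and $\cat B_2$ are maximal and distinct, this forces $\Ker F\cap\cat A^{\fp} = \cat A^{\fp}$, so $h(\unit)\in\Ker F$, and since $h(\unit)$ is the $\otimes$-unit of $\cat A$ we get $h(E_{\cat B_1})\otimes h(E_{\cat B_2}) = F(h(\unit)) = 0$. (This orthogonality is essentially contained in \cite{BalmerKrauseStevenson19}; the argument is recorded for completeness.)

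For the converse, assume $\Supph_{\cat T}$ has the detection property and $\phi$ is a bijection; the goal is to verify $\Supp_{\cat T}(s\otimes t) = \Supp_{\cat T}(s)\cap\Supp_{\cat T}(t)$ for all $s,t\in\cat T$. The inclusion ``$\subseteq$'' holds for any support function, so fix $\cat P\in\Supp_{\cat T}(s)\cap\Supp_{\cat T}(t)$ and write $\{\cat B\} := \phi^{-1}(\{\cat P\})$, using injectivity of $\phi$. Since $\kappaP\otimes s\ne 0$, the detection property gives $\Supph_{\cat T}(\kappaP\otimes s)\ne\emptyset$; but by the tensor product formula for homological support (\cref{rem:supph-tensor-product}) together with \cref{cor:supphgamma},
\[
	\Supph_{\cat T}(\kappaP\otimes s) = \Supph_{\cat T}(\kappaP)\cap\Supph_{\cat T}(s) = \phi^{-1}(\{\cat P\})\cap\Supph_{\cat T}(s) = \{\cat B\}\cap\Supph_{\cat T}(s),
\]
so $\cat B\in\Supph_{\cat T}(s)$, and likewise $\cat B\in\Supph_{\cat T}(t)$. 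Applying the tensor product formula for homological support again gives $\cat B\in\Supph_{\cat T}(s)\cap\Supph_{\cat T}(t) = \Supph_{\cat T}(s\otimes t)$, and then once more
\[
	\cat B\in\Supph_{\cat T}(s\otimes t)\cap\{\cat B\} = \Supph_{\cat T}(s\otimes t)\cap\Supph_{\cat T}(\kappaP) = \Supph_{\cat T}(\kappaP\otimes s\otimes t).
\]
In particular $\Supph_{\cat T}(\kappaP\otimes s\otimes t)\ne\emptyset$, so $\kappaP\otimes s\otimes t\ne 0$, i.e.\ $\cat P\in\Supp_{\cat T}(s\otimes t)$.

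The only nontrivial ingredient is the $\otimes$-orthogonality of the $E_{\cat B}$ for distinct homological primes; everything else is formal manipulation of the two tensor product formulas together with \cref{lem:support_injectives} and \cref{cor:supphgamma}. That orthogonality itself reduces to the elementary observation that distinct maximal Serre $\otimes$-ideals of $\cat A^{\fp}$ jointly generate $\cat A^{\fp}$, combined with exactness of $h(E_{\cat B})\otimes(-)$ from \cite{BalmerKrauseStevenson19}; this is the step to get right.
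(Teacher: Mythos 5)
Your proof is correct and follows the paper's argument closely: both directions hinge on $\Supp_{\cat T}(E_{\cat B})=\{\phi(\cat B)\}$ from \cref{lem:support_injectives}, the $\otimes$-orthogonality $E_{\cat B_1}\otimes E_{\cat B_2}=0$ for distinct homological primes, the tensor product theorem for $\Supph_{\cat T}$, and \cref{cor:supphgamma}, exactly as in the paper (the converse direction even streamlines the paper's manipulation slightly). The only substantive difference is that you re-derive the orthogonality $E_{\cat B_1}\otimes E_{\cat B_2}=0$ from maximality of the $\cat B_i$ together with exactness of $h(E_{\cat B_i})\otimes(-)$, whereas the paper simply cites \cite[Prop.~5.3]{Balmer20_nilpotence}; your self-contained derivation is sound, but it is an unpacking of that reference rather than a genuinely different route.
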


\begin{proof}
	($\Rightarrow$) The map $\phi$ is always surjective (\cref{rem:homological_spectrum_map}) so it suffices to show it is injective. Assume by way of contradiction that we have maximal Serre $\otimes$-ideals $\cat B,\cat B'$ with $\cat B \ne \cat B'$ and $\phi(\cat B) = \phi(\cat B')$. By \cite[Prop.~5.3]{Balmer20_nilpotence} we must have $E_{\cat B} \otimes E_{\cat B'} = 0$. By \cref{lem:support_injectives} and the assumed tensor product formula this gives 
    \[
		\emptyset = \Supp_{\cat T}(E_{\cat B} \otimes E_{\cat B'}) = \Supp_{\cat T}(E_{\cat B}) \cap \Supp_{\cat T}(E_{\cat B'}) \ni \phi(\cat B) = \phi(\cat B')
    \]
	which is absurd.

	We will now prove the converse assuming that $\Supph_{\cat T}$ has the detection property. Since the inclusion $\Supp_{\cat T}(x\otimes y) \subseteq \Supp_{\cat T}(x) \cap \Supp_{\cat T}(y)$ holds for any $x, y \in \cat T$, it suffices to check the reverse inclusion. To this end, let $\cat P \in \Supp_{\cat T}(x) \cap \Supp_{\cat T}(y)$ and write $\cat B \coloneqq \phi^{-1}(\cat P)$. In particular, we have $x \otimes \kappaP \neq 0$, so combining the detection property for $\Supph_{\cat T}$ with \cref{prop:detection} we get
	\[
		\emptyset \neq \phi(\Supph_{\cat T}(x \otimes \kappaP)) \subseteq \Supp_{\cat T}(x \otimes \kappaP)) \subseteq \{\cat P\}
	\]
	and hence $\phi(\Supph_{\cat T}(x \otimes \kappaP)) = \{\cat P\}$. The same argument also works for $y$. Because~$\kappaP$ is idempotent and $\Supph_{\cat T}$ satisfies the tensor product formula, this implies
	\[
		\Supph_{\cat T}(x \otimes y \otimes \kappaP) = \Supph_{\cat T}(x \otimes \kappaP) \cap \Supph_{\cat T}(y \otimes \kappaP) = \{\cat B\}.
	\]
	In particular, $x \otimes y \otimes \kappaP \neq 0$ and therefore $\cat P \in \Supp_{\cat T}(x\otimes y)$, as desired.
\end{proof} 

\begin{Rem}
	The tensor product formula is not a necessary condition for $\phi$ to be bijective. For example, the comparison map is bijective for the derived category of any commutative ring \cite[Cor.~5.11]{Balmer20_nilpotence}, but there are examples of commutative rings for which the tensor product formula does not hold; see \cref{ex:oddring}.
\end{Rem}

\begin{Prop}\label{prop:supph3}
	The following two conditions are equivalent:
    \begin{enumerate}
		\item The comparison map is bijective and $\phi(\Supph_{\cat T}(t)) = \Supp_{\cat T}(t)$ for all $t \in \cat T$.
        \item $\Supph_{\cat T}(t) = \phi^{-1}(\Supp_{\cat T}(t))$ for all $t \in \cat T$.
    \end{enumerate}
	Both conditions imply:
    \begin{enumerate}
        \item[(c)] $\Supp_{\cat T}$ satisfies the tensor product formula.
    \end{enumerate}
	The converse holds, that is, (c) implies (a) and (b), if $\Supph_{\cat T}$ has the detection property.
\end{Prop}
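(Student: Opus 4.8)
The plan is to assemble the proposition from three ingredients already in hand: the surjectivity of $\phi$ (\cref{rem:homological_spectrum_map}), the tensor product theorem for homological support (\cref{rem:supph-tensor-product}), and the unconditional direction of \cref{prop:tpf-implies-balmerconj}. I would organize the argument as the cycle (a)$\Rightarrow$(b)$\Rightarrow$(c), together with (b)$\Rightarrow$(a), and finally (c)$\Rightarrow$(a) under the detection hypothesis. The implication (a)$\Rightarrow$(b) is immediate: when $\phi$ is bijective, $\phi^{-1}$ is a genuine two-sided inverse on subsets, so applying it to the equality $\phi(\Supph_{\cat T}(t)) = \Supp_{\cat T}(t)$ produces $\Supph_{\cat T}(t) = \phi^{-1}(\Supp_{\cat T}(t))$.

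For (b)$\Rightarrow$(c), only the inclusion $\Supp_{\cat T}(s) \cap \Supp_{\cat T}(t) \subseteq \Supp_{\cat T}(s \otimes t)$ needs an argument, the reverse inclusion being one of the defining properties of a support function. Given $\cat P$ in the intersection, I would use surjectivity of $\phi$ to choose a homological prime $\cat B$ with $\phi(\cat B) = \cat P$; then (b) places $\cat B$ in $\Supph_{\cat T}(s) \cap \Supph_{\cat T}(t) = \Supph_{\cat T}(s \otimes t)$, the equality here being the tensor product theorem for homological support, and a second application of (b) returns $\cat P = \phi(\cat B) \in \Supp_{\cat T}(s \otimes t)$. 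For (b)$\Rightarrow$(a): surjectivity of $\phi$ gives $\phi(\phi^{-1}(S)) = S$ for every $S \subseteq \Spc(\cat T^c)$, so (b) immediately delivers $\phi(\Supph_{\cat T}(t)) = \Supp_{\cat T}(t)$; and the bijectivity of $\phi$ follows by feeding (c), just established, into \cref{prop:tpf-implies-balmerconj}. This closes the loop, so (a) and (b) are equivalent, and each implies (c).

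It remains to prove that, when $\Supph_{\cat T}$ has the detection property, (c) implies (a) and (b). Here I would invoke \cref{cor:supph1}: the detection property for $\Supph_{\cat T}$ already forces the equality $\phi(\Supph_{\cat T}(t)) = \Supp_{\cat T}(t)$ for all $t \in \cat T$; and (c) forces $\phi$ to be a bijection by \cref{prop:tpf-implies-balmerconj}. Together these two facts are precisely statement (a), whence also (b) by the equivalence already shown.

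The argument is essentially a bookkeeping exercise chaining together earlier results, and I do not anticipate a genuine obstacle. The only point calling for a little care is the step (b)$\Rightarrow$(c), where one must remember to use the tensor product theorem for \emph{homological} support — which holds unconditionally by \cref{rem:supph-tensor-product} — together with surjectivity of $\phi$ to lift a Balmer prime to a homological prime, rather than trying to argue directly with $\Supp_{\cat T}$, for which the tensor product formula is exactly what is at issue.
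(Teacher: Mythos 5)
Your argument is correct, but it reorganizes the implications in a way that differs slightly from the paper's proof. The paper's proof of (b)$\Rightarrow$(a) handles injectivity of $\phi$ by a direct one-line computation with \cref{lem:support_injectives}: for $\cat B \in \Spch(\cat T^c)$,
\[
\phi^{-1}(\{\phi(\cat B)\}) = \phi^{-1}(\Supp_{\cat T}(E_{\cat B})) = \Supph_{\cat T}(E_{\cat B}) = \{\cat B\},
\]
using (b) for the middle equality; no reference to the tensor product formula or to \cref{prop:tpf-implies-balmerconj} is needed at this stage. The paper then proves (a)$\Rightarrow$(c) separately by pushing forward the tensor product formula for $\Supph_{\cat T}$ through the bijection $\phi$. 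You instead prove (b)$\Rightarrow$(c) first (which uses only surjectivity of $\phi$ together with the tensor product theorem for $\Supph_{\cat T}$ and requires no bijectivity) and then obtain injectivity of $\phi$ in (b)$\Rightarrow$(a) by feeding (c) into \cref{prop:tpf-implies-balmerconj}. Both routes are valid and ultimately rest on the same key inputs --- \cref{lem:support_injectives}, surjectivity of $\phi$, and the tensor product theorem for $\Supph_{\cat T}$ --- since \cref{prop:tpf-implies-balmerconj} itself invokes \cref{lem:support_injectives}. The paper's direct computation is a little shorter, but your (b)$\Rightarrow$(c) argument is a clean self-contained step that avoids having to observe that a bijection preserves intersections, which is an implicit ingredient of the paper's (a)$\Rightarrow$(c). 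The handling of (c)$\Rightarrow$(a) under the detection hypothesis (via \cref{cor:supph1} and \cref{prop:tpf-implies-balmerconj}) is identical to the paper's.
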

\begin{proof}
	(a)$\Rightarrow$(b): Applying $\phi^{-1}$ to the formula in (a) yields the claim.

	(b)$\Rightarrow$(a): The comparison map is surjective, hence 
	\[
		\phi(\Supph_{\cat T}(t)) = \phi(\phi^{-1}(\Supp_{\cat T}(t))) = \Supp_{\cat T}(t)
	\]
	for all $t \in \cat T$. It thus remains to show that $\phi$ is also injective. To this end, let $\cat B \in \Spch(\cat T^c)$ and compute using \cref{lem:support_injectives}:
	\[
		\phi^{-1}(\{\phi(\cat B)\}) = \phi^{-1}(\Supp_{\cat T}(E_{\cat B})) = \Supph_{\cat T}(E_{\cat B}) = \{\cat B\}.
	\]
	Hence $\phi$ is injective.

	(a)$\Rightarrow$(c): This follows from the tensor product formula for $\Supph_{\cat T}$, which was established in \cite[Thm.~1.2(d)]{Balmer20_bigsupport}. 

	(c)$\Rightarrow$(a): Assume now that $\Supph_{\cat T}$ detects trivial objects. The comparison map is a bijection by \cref{prop:tpf-implies-balmerconj}, so Statement (a) is a consequence of \cref{cor:supph1} and \cref{prop:tpf-implies-balmerconj}. 
\end{proof} 

We can now assemble the proof of \cref{thm:B} stated in the Introduction.

\begin{proof}[Proof of \cref{thm:B}]
	Statement (a) is the content of \cref{prop:detection}. The implications $(1) \Rightarrow (2)$ and $(2) \Rightarrow (3)$ are part of \cref{prop:supph3} and \cref{prop:tpf-implies-balmerconj}, which also establish the respective converses assuming the detection property for $\Supph_{\cat T}$.
\end{proof}

\section{The main theorem}

In this section we prove our main theorem (\cref{thm:A} in the Introduction), which establishes that the homological spectrum and Balmer spectrum coincide, and that the homological support and Balmer--Favi support coincide, when the category is stratified. 
We refer the reader to \cite[Definition~4.4]{bhs1} for our terminology regarding stratification;
in a nutshell, a category is stratified if the Balmer--Favi notion of support provides a correspondence between the localizing tensor-ideals and the subsets of the Balmer spectrum.
Along the way, we establish a point-set topological criterion on $\Spch(\cat T^c)$ for the comparison map $\phi$ to be a homeomorphism.

\begin{Rem}
	Recall that the Kolmogorov quotient $\mathrm{KQ}(X)$ of a topological space~$X$ is its reflection into the category of $T_0$-spaces (a.k.a.~Kolmogorov spaces).  In other words,	$\mathrm{KQ}(X)$ is the initial $T_0$-space equipped with a continuous map from $X$. It can be constructed explicitly as follows. Two points $x,y \in X$ are said to be \emph{topologically indistinguishable} if $\overbar{\{x\}}=\overbar{\{y\}}$.  We denote the resulting equivalence relation on~$X$ by $\equiv$. The Komolgorov quotient $\mathrm{KQ}(X)$ is then the quotient space of $X$ under $\equiv$:
    \[
		X \to \mathrm{KQ}(X) := X/\equiv.
    \]
    In particular, if $X$ is already $T_0$, then the quotient map $X \to \mathrm{KQ}(X)$ is a homeomorphism. 
\end{Rem}

\begin{Lem}\label{lem:kq}
    The comparison map $\phi$ of \cref{rem:homological_spectrum_map} exhibits $\Spc(\cat T^c)$ as the Kolmogorov quotient of $\Spch(\cat T^c)$. In particular, for any $\cat B_1,\cat B_2 \in \Spch(\cat T^c)$ we have 
    \begin{equation}\label{eq:image-char}
		\phi(\cat B_1) = \phi(\cat B_2) \Longleftrightarrow \overbar{\{\cat B_1\}} = \overbar{\{\cat B_2\}}.
	\end{equation}
\end{Lem}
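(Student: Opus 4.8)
The plan is to verify that $\phi\colon \Spch(\cat T^c) \to \Spc(\cat T^c)$ satisfies the universal property of the Kolmogorov quotient, which by the explicit description of $\mathrm{KQ}$ amounts to three things: $\phi$ is a continuous surjection, $\Spc(\cat T^c)$ is $T_0$, and $\phi$ identifies exactly the topologically indistinguishable points, i.e.\ \eqref{eq:image-char} holds. The first two are known: $\phi$ is continuous and surjective by \cref{rem:homological_spectrum_map}, and the Balmer spectrum is always $T_0$ (indeed spectral). So the real content is \eqref{eq:image-char}, together with the observation that $\phi$ is a \emph{quotient} map (not merely a continuous bijection onto $\mathrm{KQ}$ after passing to the quotient); for the latter it suffices to show that a subset of $\Spc(\cat T^c)$ is closed whenever its preimage is closed in $\Spch(\cat T^c)$, which follows once we know the closed sets of $\Spch(\cat T^c)$ are generated by the $\Supph_{\cat T}(x)$ for $x\in\cat T^c$ (by definition of the topology in \cref{ex:supph}) and that $\Supph_{\cat T}(x) = \phi^{-1}(\supp_{\cat T^c}(x))$ for compact $x$. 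That last identity is \cite[Prop.~4.4]{Balmer20_bigsupport}, recalled in the proof of \cref{lem:suppheY}.

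For \eqref{eq:image-char} itself I would argue as follows. The direction $\Leftarrow$ is immediate from continuity of $\phi$: if $\overbar{\{\cat B_1\}} = \overbar{\{\cat B_2\}}$ then $\phi(\cat B_1)$ and $\phi(\cat B_2)$ are topologically indistinguishable in $\Spc(\cat T^c)$, which is $T_0$, hence equal. For the substantive direction $\Rightarrow$, suppose $\phi(\cat B_1) = \phi(\cat B_2) =: \cat P$. I claim $\overbar{\{\cat B_i\}}$ is the smallest closed subset of $\Spch(\cat T^c)$ containing $\cat B_i$, and I want to identify it in terms of $\cat P$. Since closed sets are intersections of sets of the form $\Supph_{\cat T}(x) = \phi^{-1}(\supp_{\cat T^c}(x))$ with $x\in\cat T^c$, the closure $\overbar{\{\cat B_i\}}$ equals the intersection of all $\phi^{-1}(\supp_{\cat T^c}(x))$ over those $x$ with $\cat B_i \in \phi^{-1}(\supp_{\cat T^c}(x))$, i.e.\ with $\cat P \in \supp_{\cat T^c}(x)$. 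This set of compact objects depends only on $\cat P$, not on the choice of $\cat B_i$ in $\phi^{-1}(\cat P)$; hence $\overbar{\{\cat B_1\}} = \overbar{\{\cat B_2\}}$, as desired. Concretely, both closures equal $\phi^{-1}\big(\bigcap_{x:\,\cat P\in\supp(x)}\supp_{\cat T^c}(x)\big) = \phi^{-1}(\overbar{\{\cat P\}})$, using that in a spectral space $\overbar{\{\cat P\}} = \bigcap\{\,\supp(x) : \cat P\in\supp(x)\,\}$ because the $\supp(x)$ form a basis of closed sets.

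Finally I would package these observations: the computation in the previous paragraph shows not only \eqref{eq:image-char} but that $\phi^{-1}(\overbar{\{\cat P\}}) = \overbar{\phi^{-1}(\cat P)}$ and that the fibres of $\phi$ are exactly the $\equiv$-classes, so the induced map $\mathrm{KQ}(\Spch(\cat T^c)) \to \Spc(\cat T^c)$ is a continuous bijection; combined with $\phi$ being a quotient map (shown via the closed-set argument above), it is a homeomorphism, which is precisely the assertion that $\phi$ exhibits $\Spc(\cat T^c)$ as the Kolmogorov quotient. I expect the main obstacle to be the bookkeeping in showing $\phi$ is a quotient map — one must check that the topology on $\Spch(\cat T^c)$ really is generated \emph{as a topology} (arbitrary intersections of basic closed sets, together with finite unions) by the $\Supph_{\cat T}(x)$, and then chase a closed set $Z\subseteq\Spc(\cat T^c)$ with $\phi^{-1}(Z)$ closed back to being an intersection of $\supp_{\cat T^c}(x)$'s; this is routine but is the one place where one cannot simply cite a black box. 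Everything else reduces to the identity $\Supph_{\cat T}|_{\cat T^c} = \phi^{-1}\circ\supp_{\cat T^c}$ and the $T_0$-ness of the Balmer spectrum.
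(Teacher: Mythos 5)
Your proposal is correct and follows essentially the same strategy as the paper: observe that the topology on $\Spch(\cat T^c)$ is pulled back along $\phi$ from $\Spc(\cat T^c)$ (via $\Supph|_{\cat T^c} = \phi^{-1}\circ\supp$), derive the closure formula $\overbar{\{\cat B\}} = \phi^{-1}(\overbar{\{\phi(\cat B)\}})$, and then show $\phi$ is a quotient map using surjectivity together with the fact that basic closed sets upstairs are exactly the preimages of basic closed sets downstairs. The only small variation is your $(\Leftarrow)$ argument for \eqref{eq:image-char}, which invokes continuity and $T_0$-ness of the Balmer spectrum as a tidy shortcut where the paper instead unwinds the closure formula through compact objects.
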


\begin{proof}
	We first verify \eqref{eq:image-char}, which identifies the equivalence relation $\equiv$ of topological indistinguishability on $\Spch(\cat T^c)$ with the equivalence relation induced by the function $\phi$. To this end, we observe that 
	\begin{equation}\label{eq:Bclosure}
		\overbar{\{ \cat B\}} = \phi^{-1}(\overbar{\{\phi(\cat B)\}})
	\end{equation}
	for any $\cat B \in \Spch(\cat T^c)$.  This is a routine verification from the definitions.  Indeed, the topology on $\Spch(\cat T^c)$ is defined by taking a basis of closed sets to be those sets of the form $\supph(x) = \phi^{-1}(\supp(x))$ for $x \in \cat T^c$.  Hence
	\[
		\begin{split}
			\overbar{\{ \cat B\}} & = \bigcap_{\substack{x \in \cat T^c \,:\\ \cat B \in \supph(x)}} \supph(x)
			= \bigcap_{\substack{x \in \cat T^c \,:\\ \phi( \cat B ) \in \supp(x)}} \phi^{-1}(\supp(x))  
			=\phi^{-1}(\overbar{\{\phi(\cat B)\}}).
		\end{split}
	\]
	The ($\Rightarrow$) direction of \eqref{eq:image-char} is then immediate from \eqref{eq:Bclosure}.  For the converse, observe that if $x \not\in \phi(\cat B_1)$ then $\phi(\cat B_1) \in \supp(x)$ so that $\overbar{\{\phi(\cat B_1)\}} \subseteq \supp(x)$.  Thus, if $\overbar{\{\cat B_1\}} = \overbar{\{\cat B_2\}}$, we have
	\[
		\phi^{-1}(\overbar{\{\phi(\cat B_2)\}}) = \phi^{-1}(\overbar{\{\phi(\cat B_1)\}}) \subseteq \phi^{-1}(\supp(x))
	\]
	by \eqref{eq:Bclosure}, and it follows that $x \not\in \phi(\cat B_2)$. This establishes the $(\Leftarrow)$ direction of \eqref{eq:image-char}.
	
	Since $\phi$ determines the same equivalence relation on $\Spch(\cat T^c)$ as the Kolmogorov quotient, it remains to prove that $\phi$ is a quotient map.  It is continuous by construction and is surjective by \cite[Cor.~3.9]{Balmer20_nilpotence}.  All that remains is to show that $\Spc(\cat T^c)$ has the finest topology for which $\phi$ is continuous.  As noted above, every basic closed set of $\Spch(\cat T^c)$ is the preimage of a basic closed set of $\Spc(\cat T^c)$. Hence if $Y \subseteq \Spc(\cat T^c)$ is a subset such that $\phi^{-1}(Y)$ is closed, then we can write
	\[
		\phi^{-1}(Y) = \bigcap \phi^{-1}(A_i) = \phi^{-1}(\bigcap A_i)
	\]
	for basic closed sets $A_i \subseteq \Spc(\cat T^c)$. Since $\phi$ is surjective, this implies $Y = \bigcap A_i$ is closed, as desired.
\end{proof}

\begin{Prop}\label{prop:homeo}
	The following are equivalent:
	\begin{enumerate}
		\item The comparison map $\phi$ is a bijection.
		\item The comparison map $\phi$ is a homeomorphism.
		\item The homological spectrum $\Spch(\cat T^c)$ is a spectral space.
		\item \label{it:Spch-T0} The homological spectrum $\Spch(\cat T^c)$ is $T_0$.
	\end{enumerate}
\end{Prop}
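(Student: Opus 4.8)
The plan is to reduce everything to \cref{lem:kq}, which identifies $\phi$ with the Kolmogorov quotient map $\Spch(\cat T^c) \to \mathrm{KQ}(\Spch(\cat T^c)) = \Spc(\cat T^c)$, together with the fact that $\Spc(\cat T^c)$ is a spectral space \cite{Balmer05a}.

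First I would establish the equivalence of (a) and (d). Since $\phi$ is always surjective (\cref{rem:homological_spectrum_map}), it is a bijection if and only if it is injective. By the equivalence \eqref{eq:image-char} of \cref{lem:kq}, $\phi(\cat B_1) = \phi(\cat B_2)$ holds precisely when $\cat B_1$ and $\cat B_2$ are topologically indistinguishable; hence $\phi$ is injective if and only if any two topologically indistinguishable points of $\Spch(\cat T^c)$ coincide, which is exactly the assertion that $\Spch(\cat T^c)$ is $T_0$.

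Next I would run the cycle (d) $\Rightarrow$ (b) $\Rightarrow$ (c) $\Rightarrow$ (d), which together with the previous step closes all the equivalences. For (d) $\Rightarrow$ (b): if $\Spch(\cat T^c)$ is already $T_0$, then the canonical map to its Kolmogorov quotient is a homeomorphism, and by \cref{lem:kq} this map is $\phi$; equivalently, by the first step $\phi$ is a continuous bijection, and by \cref{lem:kq} it is a quotient map, so it is a homeomorphism. For (b) $\Rightarrow$ (c): $\Spc(\cat T^c)$ is a spectral space \cite{Balmer05a} and spectrality is a homeomorphism invariant, so $\Spch(\cat T^c)$ is spectral. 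For (c) $\Rightarrow$ (d): every spectral space is sober, hence in particular $T_0$.

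The argument is essentially formal once \cref{lem:kq} is in hand, so I do not anticipate a genuine obstacle; the only external inputs are Balmer's theorem that $\Spc(\cat T^c)$ is spectral and the elementary facts that a bijective quotient map is a homeomorphism and that spectral spaces are $T_0$. The step that most deserves a carefully worded sentence is the passage through \eqref{eq:image-char} in the equivalence of (a) and (d).
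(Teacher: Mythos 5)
Your proof is correct and uses the same essential ingredients as the paper's: \cref{lem:kq} (including \eqref{eq:image-char} and the quotient-map assertion) plus spectrality of $\Spc(\cat T^c)$. The only difference is organizational: the paper runs the single cycle (a)$\Rightarrow$(b)$\Rightarrow$(c)$\Rightarrow$(d)$\Rightarrow$(a), with (a)$\Rightarrow$(b) following immediately because the topology on $\Spch(\cat T^c)$ is by construction pulled back along $\phi$, whereas you first settle (a)$\Leftrightarrow$(d) via \eqref{eq:image-char} and then cycle through (d)$\Rightarrow$(b)$\Rightarrow$(c)$\Rightarrow$(d); this is mildly redundant but entirely sound.
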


\begin{proof}
	The topology on $\Spch(\cat T^c)$ is defined by taking a basis of closed sets to be those sets of the form $\Supph_{\cat T}(x) = \phi^{-1}(\supp(x))$ for $x \in \cat T^c$.  That is, we pull back the usual basis of the topology on the Balmer spectrum.  From this observation, (a)$\Rightarrow$(b) is immediate.  Moreover, we have (b)$\Rightarrow$(c)$\Rightarrow$(d) simply because the Balmer spectrum $\Spc(\cat T^c)$ is a spectral space.  Finally, (d)$\Rightarrow$(a) follows from \cref{lem:kq}, so the proof is complete. 
\end{proof}

\begin{Rem}
	\cref{lem:kq} and \cref{prop:homeo} have nothing to do with the big tt-category $\cat T$. They hold with $\cat T^c$ replaced by any essentially small rigid tt-category~$\cat K$.  The rigid hypothesis ensures that the comparison map $\phi:\Spch(\cat K) \to \Spc(\cat K)$ is surjective (as proved in \cite[Cor.~3.9]{Balmer20_nilpotence}).
\end{Rem}

\begin{Thm}\label{thm:homological}
	Let $\cat T$ be a rigidly-compactly generated tt-category with $\Spc(\cat T^c)$ weakly noetherian (\cref{rem:weakly-noetherian}).  If $\cat T$ is stratified, then $\cat T$ satisfies the equivalent statements of \cref{prop:supph3}. In particular, the comparison map 
	\[
		\phi\colon \Spch(\cat T^c) \to \Spc(\cat T^c)
	\]
	is a bijection (hence a homeomorphism), $\Supph_{\cat T}(t) = \phi^{-1}(\Supp_{\cat T}(t))$ for all ${t \in \cat T}$, and both $\Supp$ and $\Supph$ have the tensor product property and the detection property (\cref{ter:suppproperties}).
\end{Thm}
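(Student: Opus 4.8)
The plan is to deduce the theorem from \cref{prop:supph3} by verifying its condition~(b) directly: when $\cat T$ is stratified, $\Supph_{\cat T}(t) = \phi^{-1}(\Supp_{\cat T}(t))$ for every $t \in \cat T$. Granting this, \cref{prop:supph3} yields condition~(a) --- so $\phi$ is a bijection, hence a homeomorphism by \cref{prop:homeo} --- as well as condition~(c), the tensor product formula for $\Supp_{\cat T}$; the tensor product formula for $\Supph_{\cat T}$ holds unconditionally by \cref{rem:supph-tensor-product}; and the detection properties are handled at the end.

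The inclusion $\Supph_{\cat T}(t) \subseteq \phi^{-1}(\Supp_{\cat T}(t))$ is precisely \cref{prop:detection} and requires no hypotheses, so the work lies in the reverse inclusion. Fix $\cat B \in \Spch(\cat T^c)$ with $\cat P \coloneqq \phi(\cat B) \in \Supp_{\cat T}(t)$; we must show $[t, E_{\cat B}] \neq 0$. The crucial observation is that
\[
	\cat L_{\cat B} \coloneqq \SET{x \in \cat T}{[x, E_{\cat B}] = 0}
\]
is a localizing tensor-ideal of $\cat T$: it is closed under suspensions, cones and retracts because $[-, E_{\cat B}]$ is cohomological, under arbitrary coproducts because $[-, E_{\cat B}]$ converts them into products, and under $\otimes$ with arbitrary objects because $[y \otimes x, E_{\cat B}] \cong [y, [x, E_{\cat B}]]$. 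By \cref{lem:support_injectives} we have $\Supp_{\cat T}(E_{\cat B}) = \{\cat P\}$; in particular $E_{\cat B} \neq 0$, so $\id_{E_{\cat B}} \neq 0$ and hence $E_{\cat B} \notin \cat L_{\cat B}$. On the other hand, since $\cat P \in \Supp_{\cat T}(t)$ we have $\Supp_{\cat T}(\Loco{E_{\cat B}}) = \{\cat P\} \subseteq \Supp_{\cat T}(t) = \Supp_{\cat T}(\Loco{t})$, and stratification --- which says that membership in a localizing tensor-ideal is detected by Balmer--Favi support --- then forces $E_{\cat B} \in \Loco{t}$. If $[t, E_{\cat B}] = 0$, that is $t \in \cat L_{\cat B}$, then $\Loco{t} \subseteq \cat L_{\cat B}$ and so $E_{\cat B} \in \cat L_{\cat B}$, a contradiction. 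Hence $\cat B \in \Supph_{\cat T}(t)$, which completes the verification of condition~(b).

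Finally, the detection properties. Stratification gives the detection property for $\Supp_{\cat T}$ immediately: if $\Supp_{\cat T}(t) = \emptyset$ then $\Loco{t}$ is a localizing tensor-ideal with empty support, hence is the zero ideal, so $t = 0$. Combining this with the formula just proved and the surjectivity of $\phi$ shows $\Supph_{\cat T}(t) = \emptyset \iff \Supp_{\cat T}(t) = \emptyset$, giving the detection property for $\Supph_{\cat T}$ as well (one could instead invoke \cref{cor:supph1}). I expect the crux of the proof to be the reverse inclusion above --- passing from the nonvanishing of $\kappaP \otimes t$ (\cref{ex:balmer-favi-support}) to that of $[t, E_{\cat B}]$ by means of the localizing tensor-ideal $\cat L_{\cat B}$ and the stratification hypothesis; everything else is bookkeeping among the results of \cref{sec:comparasion} together with \cref{prop:homeo}.
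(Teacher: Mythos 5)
Your proof is correct and takes essentially the same approach as the paper: both hinge on $\Supp_{\cat T}(E_{\cat B}) = \{\phi(\cat B)\}$ from \cref{lem:support_injectives}, use the stratification hypothesis to place $E_{\cat B}$ in the localizing $\otimes$-ideal generated by the relevant object, and exploit that $\{x : [x,E_{\cat B}]=0\}$ is a localizing $\otimes$-ideal to produce the contradiction $[E_{\cat B},E_{\cat B}]=0$. The only (cosmetic) difference is that the paper organizes the argument as establishing the detection property for $\Supph_{\cat T}$ by working with $\Loco{\kappaP \otimes t}$ and then invoking \cref{prop:supph3}, whereas you verify condition~(b) of \cref{prop:supph3} directly by working with $\Loco{t}$.
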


\begin{proof}
	For any $\cat B \in \Supph_{\cat T}$ we have $\Supp_{\cat T}(E_{\cat B}) \simeq \{\phi(\cat B)\}$ by \Cref{lem:support_injectives}. If we can show that $\Supph_{\cat T}$ has the detection property, then the claim will follow from \cref{prop:supph3}, \cref{prop:homeo} and \cref{rem:supph-tensor-product}.  To this end, let $t \in \cat T$ be a non-zero object. Since $\cat T$ is stratified, $\Supp_{\cat T}$ detects trivial objects, so there exists $\cat P \in \Spc(\cat T^c)$ with $\kappaP \otimes t \neq 0$. Since $\phi$ is surjective, we can choose some $\cat B \in \phi^{-1}(\{\cat P\})$. Minimality at $\cat P$ and $\{\cat P\} = \Supp_{\cat T}(E_{\cat B})$ implies that $E_{\cat B}\in \Loco{\kappaP \otimes t}$. If $\cat B \notin \Supph_{\cat T}(\kappaP \otimes t)$ then $[\kappaP \otimes t,E_{\cat B}] = 0$ and hence $[E_{\cat B},E_{\cat B}] = 0$, a contradiction. Therefore, we have $\cat B \in \Supph_{\cat T}(\kappaP \otimes t)$. This implies
    \[
		\cat B \in \Supph_{\cat T}(t \otimes \kappaP) = \Supph_{\cat T}(t) \cap \Supph_{\cat T}(\kappaP).
    \]
	In particular, we have $\cat B \in \Supph_{\cat T}(t)$, as desired.
\end{proof}

\begin{Rem}\label{rem:naive_support}
	The proof of \cref{thm:homological} also shows that, assuming stratification, homological support coincides with the `naive' notion of homological support (see \cref{rem:naivesupph}).
\end{Rem}

\begin{Rem}
	As explained in \cite[Section 7]{bhs1}, the Balmer--Favi notion of support is --- in weakly noetherian contexts --- the universal notion of support for the purposes of stratification.  For example, if $\cat T$ is stratified in the sense of Benson--Iyengar--Krause by the action of a graded noetherian ring (\cref{ex:bik}) then $\cat T$ is stratified in our sense (that is, by the Balmer--Favi notion of support).  Moreover, the original notion of support can be identified with the Balmer--Favi notion of support.  This invocation of \cite[Theorem~7.6]{bhs1} and its corollaries will be used repeatedly without further comment in the examples below.
\end{Rem}

\section{Applications and examples}\label{sec:examples}

We now turn to applications of the above results concerning the relationship between the Balmer--Favi support and the homological support.

\subsection*{Examples from commutative algebra}

\begin{Rem}\label{rem:DR-example}
	If $\cat T = \Der(R)$ is the derived category of a commutative ring then $\Spch(\cat T^c) \cong \Spc(\cat T^c)$ via the comparison map $\phi$ (see \cite[Corollary 5.11]{Balmer20_nilpotence}), which in turn is homeomorphic to $\Spec(R)$ by Thomason's theorem \cite{Thomason97}. Under these identifications, we claim that the homological support of an object $X \in \Der(R)$ is given by 
	\[
		\Supph(X) = \{\mathfrak{p} \in \Spec(R)\mid X \otimes \kappa(\mathfrak{p}) \neq 0\}
	\]
	where $\kappa(\mathfrak{p})$ denotes the residue field of $R$ at the prime ideal $\mathfrak{p}$. Indeed, Balmer and Cameron \cite{BalmerCameron20pp} prove that $E_{\cat B} \simeq \kappa(\mathfrak{p})$ for all $\cat B \in \Spch(\Der(R)^c)$ where $\mathfrak{p}$ is the prime ideal corresponding to $\cat B$. Because $\Hom_R(M,\kappa(\mathfrak{p}))\simeq \Hom_{\kappa(\mathfrak{p})}(M\otimes\kappa(\mathfrak{p}),\kappa(\mathfrak{p}))$ vanishes if and only if $M\otimes\kappa(\mathfrak{p}) \simeq 0$, the claim follows.

	For comparison, the Balmer--Favi support can be given in terms of Koszul complexes. Indeed, for a finite sequence $\underline{x} = x_1,\ldots,x_r \in R$, we define 
	\[
		K_{\infty}(\underline{x}) \coloneqq (R \to R[1/{x_1}]) \otimes \cdots \otimes (R \to R[1/{x_r}])
	\]
	and more generally $K_{\infty}(\underline{x}; M) \coloneqq K_{\infty}(\underline{x}) \otimes M$. Then
	\[
		\begin{split}
			\Supp(X) = \{ \frak p \in \Spec(R) \mid K_{\infty}(\underline{x};X_{\frak p}) \ne 0 \text{ for every finite sequence } \underline{x} \in \frak p \},
		\end{split}
	\]
	see \cite[Lemma~5.2]{BillySanders2017pp}. If $\frak p = (x_1,\ldots,x_n)$ is finitely generated (for example, if $R$ is noetherian), then it suffices to check this for the sequence $(x_1,\ldots,x_n)$. 
\end{Rem}

\begin{Exa}\label{ex:noetherian_ring}
	If $R$ is noetherian, then $\Der(R)$ is stratified (as established by Neeman \cite{Neeman92a}; see \cite[Example~5.7]{bhs1}). Hence \cref{thm:homological} applies and we conclude that $\Supph(X)$ and $\Supp(X)$ agree for all $X \in \Der(R)$. This can also be deduced from work of Foxby and Iyengar \cite[Thm.~2.1 and Thm.~4.1]{FoxbyIyengar03}; see also \cite[Rem.~9.2]{BensonIyengarKrause08}. 
\end{Exa}

\begin{Exa}
	The previous example can be extended in a number of ways. For example, let $R$ be a $G$-graded ring, where $G$ is an abelian group. Then if $R$ is an \mbox{$\epsilon$-commutative} noetherian ring (see \cite[Def.~2.4]{DellAmbrogioStevenson13}), the derived category of graded $R$-modules $\Der(R)$ is stratified by $\Spc(\Der(R)^c) \cong \Spec(R)$ (see \cite[Thm.~5.7]{DellAmbrogioStevenson13}).  In particular, the homological spectrum and the Balmer spectrum can both be identified with $\Spec(R)$. The argument in the non-graded case (\cite[Cor.~3.3]{BalmerCameron20pp}) goes through verbatim to identify the pure-injective $E_{\cat{B}}$ corresponding to the prime~$\frak p$ with the residue field $\kappa(\frak p) \coloneqq R_{\frak p}/\frak pR_{\frak p}$ (which is a field in the graded sense; see \cite[Lem.~4.2]{DellAmbrogioStevenson13}). By \Cref{thm:homological}, the Balmer--Favi support agrees with the homological support 
	\[
		\Supph(X) = \{\mathfrak{p} \in \Spec(R)\mid X \otimes \kappa(\mathfrak{p}) \neq 0\}.
	\]
	This has also been proved directly by Dell'Ambrogio and Stevenson \cite[Cor.~5.6]{DellAmbrogioStevenson13}.
\end{Exa}

\begin{Exa}
	In another direction, one can instead assume that $R$ is a commutative dg-algebra with $H^*(R)$ noetherian, such that $R$ is formal.  In this case, $\Der(R)$ is stratified by $\Spc(\Der(R)^c) \cong \Spec(H^*(R))$ (see \cite[Thm.~8.1]{BensonIyengarKrause11b}). Similarly to \Cref{ex:noetherian_ring}, one identifies the Balmer--Favi support with that defined by Koszul complexes and the homological support with that defined via residue fields.  \Cref{thm:homological} implies that these two approaches define equivalent theories of support.  We leave the details to the interested reader. 
\end{Exa}

\begin{Exa}\label{ex:oddring}
	Let $k$ be a field, and let $R$ be the ring 
	\[
		R = \frac{k[x_2,x_3,\ldots]}{(x_2^2,x_3^3,\ldots)}
	\]
	considered by Neeman in \cite{Neeman00}. Even though $R$ is non-noetherian, $\Spec(R) = \Spc(\Der(R)^c)$ is a point and so is a noetherian space.  However, Neeman shows that~$\Der(R)$ has many localizing tensor ideals and so $\Der(R)$ cannot be stratified. To see this from the perspective of \Cref{thm:homological}, we claim that there exists a non-zero complex $I \in \Der(R)$ such that $\Supph(I) = \emptyset$, so that $\Supph$ cannot have the detection property.  Indeed, there exists a non-zero complex $I \in \Der(R)$ with $I \otimes I = 0$ (see \cite[Thm.~C]{DwyerPalmieri08}).  Because $\Supph$ has the tensor product property, this means it cannot have the detection property (see \Cref{rem:tensor-nilpotence}).  
	Note that in this case $\Supp$ detects the triviality of a complex (see \cite[Example 4.6]{bhs1}), so there cannot be a tensor product formula for $\Supp$. Consequently, the Balmer--Favi support and the homological support do not agree in this example.
\end{Exa}

\begin{Rem}
	The above example shows that the homological support and the Balmer--Favi support can differ even if the homological and tensor triangular spectra coincide.
\end{Rem}

\begin{Exa}
    Let $R$ be the ring denoted $A$ by Keller in \cite[Section~2]{Keller94b}.
	It is a non-discrete valuation domain of rank 1 whose value group is $\bbZ[1/\ell] \subset \bbQ$; see \cite[Theorem II.3.8]{FuchsSalce01} and
	\cite[Example~5.24]{BazzoniStovicek17}.
	In particular, $\Spec(R) =\{0,\frak m\}$.
%	and is evidently noetherian.
%	is evidently notherian.
	%consists of two points (and is noetherian).
	The residue fields for the two prime ideals are
	$\kappa(0)=Q$
	and $\kappa(\frak m)=R/\frak m$, respectively.
	Inspired by \cite[Example~5.7]{BillySanders2017pp}, consider
%the complex 
	%$X \coloneqq \frak m \otimes Q/R \in \Der(R)$.
$X \coloneqq Q/R \otimes_R^{\bbL} \frak m \in \Der(R)$.
%$X \coloneqq Q/R \otimes_R^{\bbL} R/\frak m \in \Der(R)$.
	Since $\Tor_1^R(Q/R,R/\frak m) = R/\frak m \neq 0$,
	the exact sequence 
	\[0\to \Tor_1^R(Q/R,R/\frak m)\to Q/R \otimes_R \frak m\]
	implies that $Q/R \otimes_R \frak m \neq 0$.
	Hence $X\in \Der(R)$ is nonzero.
	%is a nontrivial complex.
%	This is a nontrivial complex, since $Q/R \otimes_R \frak m \neq 0$. (This follows from the fact that $\Tor_1^R(Q/R,R/\frak m)=R/\frak m\neq 0$.)
	On the other hand,
%	Note that 
	%$X \otimes_R^{\bbL} \kappa(0)=X \otimes_R^{\bbL} Q = 0$
	$X \otimes_R^{\bbL} Q = 0$
	since $Q/R \otimes_R^{\bbL} Q=0$.
	Moreover, the ideal $\frak m$ is 
	%a flat $R$-module
	flat
	(since valuation domains have weak dimension at most one \cite[Theorem~10.4]{FuchsSalce01})
	and the nature of the value group $\bbZ[1/\ell]$ implies that $\frak m^2=\frak m$.
	It follows that $\frak m \otimes_R^{\bbL} R/\frak m= 0$ in $\Der(R)$
	and hence 
	%$X \otimes_R^{\bbL} \kappa(\frak m) = X \otimes_R^{\bbL} \frak m =0$.
	$X \otimes_R^{\bbL} R/\frak m =0$.
	%in $\Der(R)$.
	This establishes, by \cref{rem:DR-example},
	that $\Supph(X) = \emptyset$.
	Thus $\Supph$ does not have the detection property.
	By \cref{thm:homological}, $\Der(R)$ cannot be stratified.
\end{Exa}
%\begin{Exa}
%    Let $R$ be the ring denoted $A$ by Keller in \cite[Section~2]{Keller94b}. As explained in \cite[Example~5.24]{BazzoniStovicek17}, this is a valuation ring with $\Spec(R) = \{ 0, \frak m \}$. In particular, $\Spec(R)$ is noetherian. However, there exists a non-trivial complex~$X$ with $\Supph(X) = \emptyset$ (see \cite[Example~5.7]{BillySanders2017pp}), and so $\Supph$ does not have the detection property. By \Cref{thm:homological}, $\Der(R)$ cannot be stratified. 
%\end{Exa}

\subsection*{Stable homotopy theory.}\hspace{1em}

\smallskip

\noindent
Let $\Sp$ denote the stable homotopy category of $p$-local spectra for a fixed prime $p$. We recall the description of $\Spc(\Sp^c)$ due to Hopkins and Smith \cite{HopkinsSmith98}. 

\begin{Not}
	For each $0 \le h \le \infty$, let $\cat C_h \coloneqq \SET{ x \in \Sp^c }{K(h)_*(x) = 0}$.
\end{Not}

\begin{Rem}
	Here $K(h)_*$ is the Morava $K$-theory homology:
	\[
		K(h)_* \colon \Sp \to  \bbF_{p}[v_h^{\pm 1}]\GrMMod
	\]
	with $|v_h| = 2(p^h-1)$, and we make the convention that the target is graded \mbox{$\bbQ$-modules} when $h = 0$ and graded $\Fp$-modules when $h = \infty$; i.e., we take $K(0) = \HQ$ and $K(\infty) = \HFp$. As interpreted by Balmer \cite[Cor.~9.5]{Balmer10b}, the $\cat C_h$ are exactly the prime ideals of $\Sp^c$. 
\end{Rem}

\begin{Thm}[Hopkins--Smith]\label{thm:hopkins_smith_thick}
	The spectrum
	\[
		\Spc(\cat \Sp^c) = \cat C_\infty - \dots - \cat C_{h+1} - \cat C_{h} - \cdots - \cat C_{1} - \cat C_0
	\]
	is an infinite tower of connected points, where closure goes to the left: $\overbar{\{\cat C_h\}} = \SET{\cat C_k}{h \le k \le \infty}$.  In particular, the space is irreducible with generic point $\cat C_0 = \Sp_{\mathrm{tor}}^c$ and $\cat C_\infty = (0)$ is the unique closed point.
\end{Thm}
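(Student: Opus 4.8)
The plan is to take as given the identification of the underlying set of $\Spc(\Sp^c)$ with $\SET{\cat C_h}{0 \le h \le \infty}$ --- this is Balmer's reinterpretation of the Hopkins--Smith thick subcategory theorem recorded in the preceding remark, using also that every thick subcategory of $\Sp^c$ is automatically a $\otimes$-ideal because $\Sp^c = \thick(\unit)$ --- and to extract everything else from a computation of the supports of compact objects. The only external inputs I would invoke are the existence, for each $0 \le n < \infty$, of a finite $p$-local spectrum $F(n)$ of type $n$ (Mitchell; part of the Hopkins--Smith circle of ideas), together with the defining property of type: $K(h)_*(F(n)) = 0$ precisely when $h < n$, while $K(h)_*(F(n)) \neq 0$ for $h \ge n$.

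Granting this, a nonzero finite spectrum $x$ of type $n$ lies in $\cat C_h$ exactly when $K(h)_*(x) = 0$, \ie when $h < n$, so that
\[
	\supp_{\Sp^c}(x) = \SET{\cat C_h}{x \notin \cat C_h} = \SET{\cat C_h}{n \le h \le \infty}.
\]
Since the basic closed subsets of $\Spc(\Sp^c)$ are by definition the sets $\supp_{\Sp^c}(x)$ for $x \in \Sp^c$, and every value of $n$ is realized by $F(n)$, the basic closed sets are precisely $\emptyset = \supp_{\Sp^c}(0)$ together with $Z_n \coloneqq \SET{\cat C_h}{n \le h \le \infty}$ for $0 \le n < \infty$. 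As $F(h+1)$ lies in $\cat C_h$ but not in $\cat C_{h+1}$, the $\cat C_h$ are pairwise distinct. The $Z_n$ form a descending chain under inclusion, so $\overbar{\{\cat C_h\}}$ --- the intersection of the basic closed sets containing $\cat C_h$ --- equals $Z_h$ when $h < \infty$; and $\{\cat C_\infty\} = \bigcap_{n < \infty} Z_n$ is closed, so $\overbar{\{\cat C_\infty\}} = \{\cat C_\infty\}$. In every case $\overbar{\{\cat C_h\}} = \SET{\cat C_k}{h \le k \le \infty}$, which is the asserted closure formula. Since $\overbar{\{\cat C_0\}} = Z_0$ is the whole space, the spectrum is irreducible --- hence connected --- with generic point $\cat C_0 = \Sp_{\mathrm{tor}}^c$, and $\cat C_\infty$ is a closed point, the unique one because $\overbar{\{\cat C_h\}} \supsetneq \{\cat C_h\}$ for every $h < \infty$. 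Finally, $\cat C_\infty = (0)$ because $K(\infty) = \HFp$ and a nonzero finite spectrum has nonzero $\HFp$-homology by the Hurewicz theorem.

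I do not anticipate a genuine obstacle here: all of the depth --- the nilpotence theorem underlying both the classification of thick subcategories and the well-definedness of type, and the construction of finite spectra of each type --- is imported through the cited results, and what remains is bookkeeping with the chain $Z_0 \supseteq Z_1 \supseteq \cdots$. The one spot that warrants a little care is the behaviour at $h = \infty$: that $(0)$ is a legitimate (indeed the closed) point of the spectrum, and that $\{\cat C_\infty\}$ is closed despite not being a basic closed set --- both handled by the identity $\{\cat C_\infty\} = \bigcap_{n < \infty} Z_n$.
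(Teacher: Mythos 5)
The paper states this as a known result of Hopkins--Smith (reinterpreted via Balmer's \cite[Cor.~9.5]{Balmer10b}) and does not give a proof, so there is no internal argument to compare against. Your reconstruction is correct and is the standard one: identify the underlying set of primes via Balmer's corollary, compute $\supp(F(n)) = Z_n$ for Mitchell's type-$n$ spectra to determine the basic closed sets, and read off the closure structure from the descending chain $Z_0 \supsetneq Z_1 \supsetneq \cdots$. The one point genuinely requiring care — that $\{\cat C_\infty\}$ is closed despite not being a basic closed set, via $\{\cat C_\infty\} = \bigcap_{n<\infty} Z_n$ — you handle correctly.
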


\begin{Rem}
	By \cite[Cor.~5.10]{Balmer20_nilpotence}, the comparison map 
	\[
		\phi \colon \Spch(\Sp^c) \to \Spc(\Sp^c)
	\]
	is a bijection. Moreover, if $\cat B \in \Spch(\Sp^c)$ is the homological prime corresponding to $\cat C_h$, we have an isomorphism $E_{\cat B} \simeq K(h)$, see \cite[Cor.~3.6]{BalmerCameron20pp}.  In particular, the homological support is given by 
	\[
		\Supph(x) = \SET{ h \in \mathbb{N} \cup \infty}{ [x,K(h)] \neq 0 }. 
	\]
	Because $[x,K(h)] \simeq [K(h) \otimes x, K(h)]_{\Mod_{K(h)}}$ vanishes if and only if $ K(h) \otimes x = 0$ since $K(h)_*$ is a graded field, we deduce that 
	\begin{equation}\label{eq:Supph-Sp}
		\Supph(x) = \SET{ h \in \mathbb{N} \cup \infty}{ K(h) \otimes x \neq 0 }.
	\end{equation}
\end{Rem}

\begin{Rem}
	The space $\Spc(\Sp^{c})$ is not weakly noetherian as the closed point~$\cat C_{\infty}$ is not weakly visible. This is because $\{\cat C_{\infty}\}$ is not a \emph{Thomason} closed subset. For $0 \le h < \infty$, we claim that the idempotent $\kappaCh$ is isomorphic to $M_h^fS^0$, the fiber of the natural morphism $L^f_hS^0 \to L_{h-1}^fS^0$. Indeed $g(\cat C_h) \simeq e_{\overbar{\{\cat C_h\}}} \otimes f_{Y_{\cat C_h}}$ where $e$ and~$f$ denote the left and right tensor idempotents of the associated finite localizations (see \Cref{rem:idempotents}) and $Y_{\cat C_h} = \supp(\cat C_h)$.  It follows from the definitions that $f_{Y_{\cat C_h}} \simeq L^f_{h}S^0$ (compare \cite[Example~5.12]{BalmerSanders17}, although note that our indexing differs by one) and $e_{\overbar{\{\cat C_h\}}} \simeq C_{h-1}^fS^0$, the fiber of the finite localization $S^0 \to L_{h-1}^fS^0$.  Therefore, $g(\cat C_h) \simeq C_{h-1}^fS^0 \otimes L_{h}S^0 \simeq C_{h-1}^fL_{h}S^0 \simeq M_h^fS^0$.  We deduce that 
	\[
		\Supp(x) = \SET{ h \in \mathbb{N} }{ M^f_hS^0 \otimes x \neq 0 }.
	\]
	An almost identical argument to \cite[Prop.~5.3]{HoveyStrickland99} shows that 
	\[
		M^f_hS^0 \otimes x \neq 0 \iff T(h) \otimes x \neq 0
	\]
	where $T(h)$ is the telescope of a finite type $h$ spectrum. Hence
	\[
		\Supp(x) = \SET{  h \in \mathbb{N} }{ T(h) \otimes x \neq 0 }.
	\]
\end{Rem}

\begin{Rem}
	The homological and triangular support as defined above can never agree, since $\Supph(\HFp) = \{\infty\}$ and yet the point $\infty$ is not seen by $\Supp$ since the corresponding point $\cat C_{\infty} \in \Spc(\Sp^c)$ is not weakly visible.  However, if we set $T(\infty) \coloneqq \HFp$, we can define an extended theory of triangulated support by
	\begin{equation}\label{eq:Suppinfty-Sp}
		\Supp_{\le \infty}(x) \coloneqq \SET{  h \in \mathbb{N} \cup \infty }{ T(h) \otimes x \neq 0 }. 
	\end{equation}
	This defines a support function for $\Sp$ taking values in $\mathbb{N} \cup \infty \cong \Spc(\Sp^c)$.
	It 
	simply
	completes the Balmer--Favi support $\Supp(x)$ by possibly including the point at infinity, i.e., the closed point.  Comparing the homological support \eqref{eq:Supph-Sp} and extended triangular support \eqref{eq:Suppinfty-Sp}, we see that they agree if and only if 
	\[
		T(h) \otimes x \neq 0 \iff K(h) \otimes x \neq 0
	\]
	for $0 \le h < \infty$. This is precisely the telescope conjecture; see \cite[1.13]{MahowaldRavenelShick01}. We thus obtain:
\end{Rem}

\begin{Prop}
	The extended triangular support and the homological support on the $p$-local stable homotopy category $\Sp$ agree if and only if the telescope conjecture holds.
\end{Prop}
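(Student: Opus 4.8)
The plan is to unwind the two support functions into conditions on Bousfield classes and then recognize the telescope conjecture. First I would recall, from \eqref{eq:Supph-Sp} and \eqref{eq:Suppinfty-Sp}, that under the identification $\mathbb{N}\cup\infty \cong \Spc(\Sp^c)$ we have
\[
	\Supph(x) = \SET{ h \in \mathbb{N}\cup\infty }{ K(h)\otimes x \neq 0 } \qquad\text{and}\qquad \Supp_{\le \infty}(x) = \SET{ h \in \mathbb{N}\cup\infty }{ T(h)\otimes x \neq 0 },
\]
with the convention $T(\infty) = \HFp = K(\infty)$. Since both support functions are described membership-wise in $h$, the equality $\Supph(x) = \Supp_{\le \infty}(x)$ holds for every $x \in \Sp$ if and only if, for each height $h \in \mathbb{N}\cup\infty$ and each $x$, one has $K(h)\otimes x = 0 \Leftrightarrow T(h)\otimes x = 0$; equivalently, if and only if $\langle T(h)\rangle = \langle K(h)\rangle$ for every $h$.

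Next I would dispose of the closed point $h=\infty$: by the convention $T(\infty)=K(\infty)$ the biconditional at $h=\infty$ is a tautology, so it imposes no condition, and only the finite heights $0 \le h < \infty$ remain. Thus the two supports agree on all of $\Sp$ precisely when $\langle T(h)\rangle = \langle K(h)\rangle$ for every finite height $h$. This is by definition the telescope conjecture (equivalently, $L^f_h S^0 = L_h S^0$; see \cite[1.13]{MahowaldRavenelShick01}), which completes the equivalence.

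I do not expect a genuine obstacle here, since essentially all of the work has already been carried out in the remarks preceding the statement. The only points requiring a little care are the (trivial) treatment of the closed point $h=\infty$, where the convention $T(\infty)=K(\infty)$ makes the comparison automatic, and the observation that equality of the two support \emph{functions} on $\Sp$ unwinds into a height-by-height comparison of Bousfield classes. One could additionally note that the inclusion $\Supph(x) \subseteq \Supp_{\le \infty}(x)$ always holds, reflecting the standard unconditional inequality $\langle K(h)\rangle \le \langle T(h)\rangle$, so that the substance of the telescope conjecture is exactly the reverse containment; but this refinement is not needed for the stated biconditional.
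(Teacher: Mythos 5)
Your proposal is correct and follows essentially the same route as the paper: both unwind $\Supph$ and $\Supp_{\le\infty}$ into the membership conditions $K(h)\otimes x\neq 0$ and $T(h)\otimes x\neq 0$, note the trivial agreement at $h=\infty$, and identify the remaining height-by-height Bousfield-class equality with the telescope conjecture via \cite[1.13]{MahowaldRavenelShick01}.
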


\begin{Rem}
	For any $x \in \Sp$, we have $T(h) \otimes x = 0 \implies K(h) \otimes x = 0 $, so that $\Supph(x) \subseteq \Supp_{\le \infty}(x)$ always holds. However, neither support function has the detection property: the Brown--Comenetz dual of the sphere is a counter-example. Indeed, for the case of $T(h)$, see \cite[Lemma 7.1(d)]{HoveyPalmieri99}, while the cases of $K(h), \HQ$ and $\HFp$ are given by \cite[Corollary B.12]{HoveyStrickland99}.
\end{Rem}

\begin{Exa}
	Suppose we localize and work instead with the category of $E(n)$-local spectra $\cat T=\Sp_{E(n)}$.  The telescope conjecture holds in this category by \cite[Corollary~6.10]{HoveyStrickland99} and an analysis similar to the above shows that
	\[
		\Supph(x) = \SET{ h \in \{0,\ldots n\} }{ K(h) \otimes x \neq 0 }
	\]
	while
	\[
		\Supp(x) = \SET{ h \in \{0,\ldots n\} }{ M_hS^0 \otimes x \neq 0 }.
	\]
	These agree by \cite[Proposition~5.3]{HoveyStrickland99}. Alternatively, this follows from \Cref{thm:homological}, as \cite[Theorem 10.14]{bhs1} establishes that $\Sp_{E(n)}$ is stratified.
\end{Exa}

\subsection*{Affine weakly regular tensor triangulated categories}

\begin{Def}[Dell'Ambrogio--Stanley \cite{DellAmbrogioStanley16}]
    A tensor triangulated category $\cat T$ is said to be \emph{affine weakly regular} if it satisfies the following two conditions:
    \begin{enumerate}
        \item (affine) $\cat T$ is compactly generated by its tensor unit $\unit$. 
        \item (weakly regular) The graded endomorphism ring $R \coloneqq \Hom_{\cat T}^*(\unit,\unit)$ is a graded noetherian ring concentrated in even degrees, and for every homogeneous prime ideal $\frak p$ of $R$, the maximal ideal of the local ring $R_{\frak p}$ is generated by a (finite) regular sequence of homogeneous non-zero divisors.  
    \end{enumerate}
\end{Def}

\begin{Rem}
	The first axiom ensures that $\cat T$ is a rigidly-compactly generated tt-category.
\end{Rem}

\begin{Rem}
	Given an affine weakly regular tt-category and a prime $\frak p \in \Spec(R)$, there is a residue field object $K(\frak p)$ with the property that 
	\[
		\pi_*(K(\frak p)) \coloneqq \pi_*\Hom_{\cat T}(\unit,K(\frak p)) \cong \kappa(\frak p)
	\]
	where $\kappa(\frak p) \coloneqq R_{\frak p}/\frak pR_{\frak p}$ denotes the algebraic residue field. See \cite[Section~3]{DellAmbrogioStanley16}.
\end{Rem}

\begin{Thm}[Dell'Ambrogio--Stanley {\cite[Theorem 1.3]{DellAmbrogioStanley16}}]\label{thm:weak_affine_strat}
    If $\cat T$ is an affine weakly regular tt-category, then $\cat T$ is stratified by $\Spc(\cat T^c) \cong \Spec(R)$. 
\end{Thm}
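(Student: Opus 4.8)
The plan is to derive the theorem from the Benson--Iyengar--Krause stratification formalism applied to the canonical action of $R=\Hom_{\cat T}^*(\unit,\unit)$ on $\cat T$, and then to transport the conclusion into tensor-triangular geometry. Since $\cat T$ is affine, $\unit$ is a compact generator; in particular $\cat T$ is rigidly-compactly generated, $R$ acts centrally, and --- because $\cat T=\Loc\langle\unit\rangle$ --- every localizing subcategory of $\cat T$ is automatically a localizing tensor-ideal (if $\cat L$ is localizing and $\ell\in\cat L$, then $\ell\otimes t\in\Loc\langle\ell\otimes\unit\rangle=\Loc\langle\ell\rangle\subseteq\cat L$ for every $t\in\cat T$). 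As $R$ is graded noetherian, the local--global principle of \cite{BensonIyengarKrause11b} applies and reduces the claim to showing that, for each homogeneous prime $\frakp$, the stratum $\Gamma_{\frakp}\cat T$ is \emph{minimal}: its only localizing subcategories are $0$ and itself. (This in particular forces $\Supp_R(\unit)=\Spec(R)$, since each stratum is nonzero by the construction below.)

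The weak regularity hypothesis is exactly what yields minimality of each stratum. Fix $\frakp$ and localize, so that $R=R_{\frakp}$ is graded-local with maximal ideal $\mathfrak{m}=(x_1,\dots,x_c)$ generated by a regular sequence of homogeneous non-zero-divisors, and work inside $\Gamma_{\frakp}\cat T$. Form the Koszul object $\unit/\!\!/\underline{x}$, the iterated cofibre of the maps $\Sigma^{|x_i|}\unit\xrightarrow{\,x_i\,}\unit$; because each $x_i$ is a non-zero-divisor on $R/(x_1,\dots,x_{i-1})$, one computes $\pi_*(\unit/\!\!/\underline{x})\cong R/\mathfrak{m}\cong\kappa(\frakp)$, and $\unit/\!\!/\underline{x}$ agrees (after localization) with the residue-field object $K(\frakp)$ constructed in \cite[Section~3]{DellAmbrogioStanley16}. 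I would then establish three facts. \emph{(i)} Since $\kappa(\frakp)$ is a graded field and $K(\frakp)$ is a ring object, every $K(\frakp)$-module object of $\cat T$ is a coproduct of suspensions of $K(\frakp)$, by the usual graded-field argument; in particular such a module vanishes iff its homotopy does. \emph{(ii)} The stable Koszul complex on $\underline{x}$ places $\Gamma_{\frakp}\unit$ in $\Loc\langle\unit/\!\!/\underline{x}\rangle$ (each $x_i$-power-torsion factor lies in $\Loc\langle\unit/\!\!/x_i\rangle$, using that $x_i$ is a non-zero-divisor); since $\Gamma_{\frakp}\cat T=\Loc\langle\Gamma_{\frakp}\unit\rangle$ and conversely $\unit/\!\!/\underline{x}\in\Gamma_{\frakp}\cat T$, we obtain $\Gamma_{\frakp}\cat T=\Loc\langle K(\frakp)\rangle$, i.e.\ $K(\frakp)$ generates the stratum. \emph{(iii)} For every nonzero $t\in\Gamma_{\frakp}\cat T$ we have $\unit/\!\!/\underline{x}\otimes t\neq 0$, and hence $K(\frakp)\otimes t\neq0$ by \emph{(ii)}: arguing by induction on $c$, if $\cone(\Sigma^{|x_k|}t'\xrightarrow{\,x_k\,}t')=\unit/\!\!/x_k\otimes t'$ vanished for some nonzero $t'\in\Gamma_{\frakp}\cat T$, then $x_k$ would act invertibly on $t'$; but $\pi_*t'$ is $\mathfrak{m}$-power-torsion over $R_{\frakp}$ and $\pi_*$ is conservative (as $\unit$ generates), forcing $t'=0$ --- a contradiction. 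Granting \emph{(i)}--\emph{(iii)}, minimality follows: for $0\neq t\in\Gamma_{\frakp}\cat T$ the $K(\frakp)$-module $K(\frakp)\otimes t$ is a nonzero coproduct of suspensions of $K(\frakp)$, so $K(\frakp)$ is a retract of it and thus $K(\frakp)\in\Loc\langle t\rangle$, whence $\Gamma_{\frakp}\cat T=\Loc\langle K(\frakp)\rangle\subseteq\Loc\langle t\rangle$.

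It remains to assemble the statement. By the above, $\cat T$ is stratified by the graded noetherian ring $R$, and by \cite[Thm.~7.6 and Cor.~7.11]{bhs1} this is equivalent to stratification in the Balmer--Favi sense; moreover the identification $\Spec(R)=\Supp_R(\unit)\cong\Spc(\cat T^c)$ carries the BIK support to $\Supp_{\cat T}$. (One can also verify the homeomorphism $\Spc(\cat T^c)\cong\Spec(R)$ directly: Balmer's comparison map $\rho$ of \cite{Balmer10b} is always surjective, and here it is injective because the compact Koszul objects realize every prime of $R$ as a distinct support, so $\rho$ is a homeomorphism onto the noetherian space $\Spec(R)$.) The main obstacle is step \emph{(ii)}: producing the residue-field object $K(\frakp)$ and exploiting the \emph{finiteness} of the regular sequence --- the one place where the full strength of weak regularity is used --- to prove that $K(\frakp)$ generates all of $\Gamma_{\frakp}\cat T$. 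Steps \emph{(i)} and \emph{(iii)} and the various reductions are routine within the BIK machinery \cite{BensonIyengarKrause08,BensonIyengarKrause11b} together with the translation to tensor-triangular support of \cite[Section~7]{bhs1}.
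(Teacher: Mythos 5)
The paper does not prove this statement: it is quoted verbatim, with attribution, as Theorem~1.3 of Dell'Ambrogio--Stanley~\cite{DellAmbrogioStanley16}, and the authors use it only as a black box in the application \cref{thm:affine_comparison}. There is therefore no in-paper argument to compare against; what you have written is a reconstruction of the cited proof.

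As a reconstruction it is faithful to the Dell'Ambrogio--Stanley strategy and, modulo details correctly flagged as outsourced, essentially complete. The reduction is the standard one: affineness makes every localizing subcategory a tensor-ideal and makes $\pi_*=\Hom^*_{\cat T}(\unit,-)$ conservative; graded noetherianity gives the BIK local--global principle; so the content lies in minimality of each stratum $\Gamma_{\frakp}\cat T$. Your steps~\emph{(i)}--\emph{(iii)} are the right skeleton. Two places deserve the explicit care you already gesture at. First, step~\emph{(i)} requires not merely that $\pi_*K(\frakp)$ be a graded field but that $K(\frakp)$ carry a (homotopy) ring structure compatible with $\unit\to K(\frakp)$; a bare iterated cofibre $\unit/\!\!/\underline{x}$ is not automatically a ring, and producing that structure is precisely the technical heart of~\cite[Section~3]{DellAmbrogioStanley16}, so it is correct that you cite it rather than rederive it. Second, in step~\emph{(iii)} the claim that $\pi_*t'$ is $\mathfrak m$-power-torsion for $t'\in\Gamma_{\frakp}\cat T$ is true but not tautological: it uses that $\Gamma_{\frakp}\unit$ is the stable Koszul object on a \emph{finite} regular sequence, whose homotopy (and hence, after tensoring, that of any object of $\Gamma_{\frakp}\cat T$, via the conservative $\pi_*$ and universal-coefficient/Tor considerations) is $\mathfrak m$-power-torsion; this is exactly where weak regularity, rather than mere noetherianity, enters, and it is worth saying so. With those two points made explicit, the argument closes, and the translation to Balmer--Favi stratification and the homeomorphism $\Spc(\cat T^c)\cong\Spec(R)$ via~\cite[Section~7]{bhs1} is as you describe.
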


\begin{Thm}\label{thm:affine_comparison}
	Let $\cat T$ be an affine weakly regular tt-category. Then:
    \begin{enumerate}
        \item The comparison map  $\phi\colon \Spch(\cat T^c) \to \Spc(\cat T^c) \cong \Spec(R)$ is a homeomorphism and $\Supph(t) = \phi^{-1}(\Supp(t))$ for all $t \in \cat T$.
        \item For the homological prime $\cat B \in \Spch(\cat T^c)$ corresponding to $\frak p \in \Spec(R)$, we have an isomorphism $E_{\cat B} \simeq K(\frak p)$. 
    \end{enumerate}
\end{Thm}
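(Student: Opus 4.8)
For part (a), the plan is immediate. As the graded ring $R$ is noetherian, its homogeneous spectrum $\Spec(R)$ is a noetherian spectral space and hence weakly noetherian (compare the discussion after \cref{rem:weakly-noetherian}). By \cref{thm:weak_affine_strat}, $\cat T$ is stratified with $\Spc(\cat T^c) \cong \Spec(R)$, so \cref{thm:homological} applies and yields all the assertions of (a): $\phi$ is a homeomorphism, $\Supph(t) = \phi^{-1}(\Supp(t))$ for every $t \in \cat T$, and both $\Supp$ and $\Supph$ satisfy the tensor product formula and the detection property.

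For part (b), let $\cat P \in \Spc(\cat T^c)$ be the prime corresponding to $\frak p$ and put $\cat B \coloneqq \phi^{-1}(\cat P)$. First I would nail down the two supports involved. \Cref{lem:support_injectives} gives $\Supp(E_{\cat B}) = \{\cat P\}$, whence $\Supph(E_{\cat B}) = \phi^{-1}(\{\cat P\}) = \{\cat B\}$ by (a). On the other side, $K(\frak p)$ is nonzero since $\pi_* K(\frak p) \cong \kappa(\frak p) \neq 0$, and its construction in \cite{DellAmbrogioStanley16} as a localization of $\unit$ at $\frak p$ followed by killing a regular sequence generating the maximal ideal of $R_{\frak p}$, combined with the inclusion $\Supp(s \otimes t) \subseteq \Supp(s) \cap \Supp(t)$ and the detection property of (a), gives $\Supp(K(\frak p)) = \{\cat P\}$, hence also $\Supph(K(\frak p)) = \{\cat B\}$. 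In particular $E_{\cat B}$ and $K(\frak p)$ generate the same (minimal) localizing $\otimes$-ideal of $\cat T$ by stratification; but this alone does not force them to be isomorphic, so I would pass to the Grothendieck category $\cat A = \MTc$ of \cref{ex:supph} and argue along the lines of Balmer--Cameron \cite{BalmerCameron20pp}.

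Concretely, the steps would be as follows. (i) Show that $K(\frak p)$ is pure-injective in $\cat T$; this should follow from $K(\frak p)$ being a commutative ring object whose homotopy $\kappa(\frak p)$ is a graded field, so that $[{-}, K(\frak p)]$ is controlled by $K(\frak p)_*({-})$, exactly as for Morava $K$-theory. (ii) Deduce that $h(K(\frak p)) \in \cat A$ is injective with $\End_{\cat A}(h(K(\frak p))) \cong \End_{\cat T}(K(\frak p)) \cong \kappa(\frak p)_0$, a field, so that $h(K(\frak p))$ is indecomposable. (iii) Identify its kernel ideal: using the compatibility of $h$ with $\otimes$, the finitely presented object of $\cat A$ attached to a compact $z \in \cat T^c$ (recall every object of $\cat A^{\fp}$ arises this way) lies in $\Ker(h(K(\frak p)) \otimes {-})$ if and only if $K(\frak p) \otimes z = 0$, which by the tensor product formula and the detection property of (a) holds if and only if $\frak p \notin \supp(z)$, i.e.\ $z \in \cat P$; therefore $\Ker(h(K(\frak p)) \otimes {-}) \cap \cat A^{\fp} = \cat B$, so $\Ker(h(K(\frak p)) \otimes {-}) = \langle \cat B \rangle$. (iv) Recall from \cite{BalmerKrauseStevenson19} that $h(E_{\cat B})$ is an indecomposable injective object of $\cat A$ with $\Ker(h(E_{\cat B}) \otimes {-}) = \langle \cat B \rangle$, and that — since $\cat B$ is maximal — such an object is unique up to isomorphism and, in the graded setting, suspension; hence $h(K(\frak p)) \simeq h(\Sigma^n E_{\cat B})$ for some $n \in \Z$. (v) Conclude $K(\frak p) \simeq \Sigma^n E_{\cat B}$ since $h$ is fully faithful and conservative on pure-injectives. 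The leftover suspension is then pinned down by examining $\pi_0$: the ring map $\unit \to K(\frak p)$ induces a nonzero degree-zero map $h(\unit) \to h(E_{\cat B})$, forcing $n = 0$.

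The step I expect to be the main obstacle is (i)--(ii): verifying that the residue field object $K(\frak p)$ is pure-injective in the generality of an abstract affine weakly regular tt-category, where the module-theoretic arguments available for $\Der(R)$ do not directly apply. I anticipate this can be reduced to the graded-field structure of $\pi_* K(\frak p)$ --- the same phenomenon responsible for the pure-injectivity of Morava $K$-theories --- but it must be carried out with care. Granting (i)--(ii), the steps (iii)--(v) are routine given part (a) and the formalism of \cite{BalmerKrauseStevenson19, Balmer20_bigsupport}.
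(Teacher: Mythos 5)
Part (a) is correct and identical in substance to the paper's argument: noetherianity of $R$ makes $\Spec(R)\cong \Spc(\cat T^c)$ weakly noetherian, and \cref{thm:weak_affine_strat} plus \cref{thm:homological} do the rest.

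For part (b), your plan diverges from the paper's, and the divergence is where the trouble lies. The paper does not attempt to verify pure-injectivity of $K(\frak p)$ or to classify indecomposable injectives of $\cat A/\langle\cat B\rangle$ by hand. Instead it applies \cite[Lemma~2.2 and Theorem~3.1]{BalmerCameron20pp} directly to the base-change functor $F\colon\cat T\to\Mod_{K(\frak p)}$; those results produce $E_{\cat B}$ as a direct summand of $U(\unit_{\Mod_{K(\frak p)}})=K(\frak p)$ essentially for free, and then \cite[Lemma~3.2]{BalmerCameron20pp} (indecomposability of $K(\frak p)$, coming from its local graded endomorphism ring) finishes. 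This route sidesteps both of your hard steps.

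The genuine gap in your proposal is step (iv). The pure-injective $E_{\cat B}$ is, by construction in \cite{BalmerKrauseStevenson19}, the specific object whose restricted-Yoneda image is the injective envelope of $\unit$ in the Gabriel quotient $\cat A/\langle\cat B\rangle$; it is not characterized merely as ``an indecomposable injective with kernel $\langle\cat B\rangle$.'' Your claim that such an object is ``unique up to isomorphism and, in the graded setting, suspension'' is not an available theorem and requires knowing the structure of $\cat A/\langle\cat B\rangle$ (e.g.\ that its indecomposable injectives are precisely the suspensions of the image of $E_{\cat B}$). That is plausible here given that $\pi_*K(\frak p)$ is a graded field, but it is precisely the content you would need to extract from the Balmer--Cameron machinery rather than assume. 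Step (i), which you flag yourself, is also not free: rather than arguing from the graded-field homotopy of $K(\frak p)$ directly, the efficient route is to observe that $K(\frak p)$ is pure-injective as the image under the right adjoint $U$ of a pure-injective in $\Mod_{K(\frak p)}$ (where everything is pure-injective), using that $F$ preserves compacts; but again this is exactly what \cite[Lemma~2.2]{BalmerCameron20pp} packages. Finally, your step (v) presupposes the conclusion of step (iv); once the uniqueness-up-to-suspension issue is resolved, the $\pi_0$ argument to pin $n=0$ is fine, though again the paper's proof makes it unnecessary since the identification is on the nose.
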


\begin{proof}
	Part (a) is an immediate consequence of  \Cref{thm:weak_affine_strat} and \Cref{thm:homological}.  For part (b), we apply \cite[Lemma~2.2 and Theorem~3.1]{BalmerCameron20pp} to $F \colon \cat T \to \Mod_{K(\frak p)}$.  This implies that the corresponding pure-injective object $E_{\cat B}$ is a summand of $K(\frak p)$, but $K(\frak p)$ is indecomposable by \cite[Lemma~3.2]{BalmerCameron20pp}.
\end{proof}

\begin{Rem}
    One can also prove a nilpotence theorem for affine weakly regular tt-categories using the residue fields $K(\frak p)$.  The bijectivity of the comparison map $\phi$ can therefore also be proved using \cite[Theorem~5.4]{Balmer20_nilpotence}. Alternatively, by \cite[Theorem~1.1]{Balmer20_nilpotence} and \Cref{thm:affine_comparison} there is a nilpotence theorem as follows: 
\end{Rem}

\begin{Cor}
    Let $\cat T$ be an affine weakly regular tt-category and for $\frak p \in \Spec(R)$ write 
    \[
		K(\frak p)_*(x) \coloneqq \pi_*(K(\frak p) \otimes x).
	\]
    If $f \colon x \to y$ a morphism in $\cat T^c$ such that $K(\frak p)_*(f) = 0$ for all $\frak p \in \Spec(R)$, then there exists $n \gg 0$ such that $f^{\otimes n} = 0$ in $\cat T$. 
\end{Cor}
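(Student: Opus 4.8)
The plan is to read this off from Balmer's abstract homological nilpotence theorem \cite[Thm.~1.1]{Balmer20_nilpotence} together with the identification of the pure-injective objects $E_{\cat B}$ carried out in \Cref{thm:affine_comparison}(b). Recall that \cite[Thm.~1.1]{Balmer20_nilpotence} asserts, for a morphism $f\colon x\to y$ between compact objects of a rigidly-compactly generated tt-category, that $f^{\otimes n}=0$ for some $n\gg 0$ if and only if $\overbar{h}_{\cat B}(f)=0$ for every homological prime $\cat B\in\Spch(\cat T^c)$, where $\overbar{h}_{\cat B}\colon\cat T\to\cat A/\langle\cat B\rangle$ is the homological residue field functor of \cref{ex:supph}. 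It is harmless that the theorem is phrased for the essentially small category $\cat T^c$: since $\cat T^c\hookrightarrow\cat T$ is fully faithful, $f^{\otimes n}$ vanishes in $\cat T$ if and only if it vanishes in $\cat T^c$. So the whole task is to translate the vanishing condition $\overbar{h}_{\cat B}(f)=0$ into the residue-field condition appearing in the statement.

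For this, fix a homological prime $\cat B$ and set $\frak p:=\phi(\cat B)\in\Spec(R)$. By \Cref{thm:affine_comparison}(b) the pure-injective attached to $\cat B$ is $E_{\cat B}\simeq K(\frak p)$, and since $K(\frak p)$ is a ring object whose homotopy $\pi_*K(\frak p)\cong\kappa(\frak p)$ is a graded field, the homological residue field functor $\overbar{h}_{\cat B}$ is identified with $K(\frak p)_*(-)=\pi_*(K(\frak p)\otimes-)$; this is the very input used to prove \Cref{thm:affine_comparison}(b), namely \cite[Lemma~2.2 and Theorem~3.1]{BalmerCameron20pp} applied to $K(\frak p)\otimes-\colon\cat T\to\Mod_{K(\frak p)}$ (compare also the worked examples in \cite[Sec.~5]{Balmer20_nilpotence}). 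Hence $\overbar{h}_{\cat B}(f)=0$ if and only if $K(\frak p)_*(f)=0$. Finally, $\phi$ is surjective (indeed a bijection by \Cref{thm:affine_comparison}(a)), so under the identification $\Spc(\cat T^c)\cong\Spec(R)$ the primes $\frak p=\phi(\cat B)$ range over all of $\Spec(R)$ as $\cat B$ ranges over $\Spch(\cat T^c)$. Combining the three equivalences yields the corollary. In fact one obtains the reverse implication for free, since a graded-field object $K(\frak p)$ carries $\otimes$-nilpotent maps to zero maps, but only the stated direction is needed.

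The one genuinely non-formal point is the identification $\overbar{h}_{\cat B}\simeq K(\frak p)_*(-)$ \emph{on morphisms}, i.e.\ promoting the object-level statement ``$\overbar{h}_{\cat B}(t)=0\iff K(\frak p)\otimes t=0$'' to an equivalence of functors; this is precisely what the cited Balmer--Cameron results supply (and it is where the ring-object structure on $K(\frak p)$ and the graded-field property of $\pi_*K(\frak p)$ are essential). Everything else — the reduction via \cite[Thm.~1.1]{Balmer20_nilpotence}, the passage between $\cat T$ and $\cat T^c$, and the surjectivity of $\phi$ — is bookkeeping.
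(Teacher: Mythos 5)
Your proof is correct and follows the same route the paper intends: the paper itself only offers the one-line sketch ``by \cite[Theorem~1.1]{Balmer20_nilpotence} and \cref{thm:affine_comparison}'', and your argument is exactly the expansion of that sketch — apply Balmer's abstract nilpotence theorem via homological residue fields, then use \cref{thm:affine_comparison}(b) (via the Balmer--Cameron factorization through $K(\frak p)\otimes-$) together with surjectivity of $\phi$ to translate vanishing of $\overbar{h}_{\cat B}(f)$ into vanishing of $K(\frak p)_*(f)$. You correctly flag the one real point, namely that the object-level identification $E_{\cat B}\simeq K(\frak p)$ must be upgraded to a statement about morphisms; this is supplied by the Balmer--Cameron factorization of $\overbar{h}_{\cat B}$ through the ring functor $K(\frak p)\otimes-$ (so $K(\frak p)\otimes f=0\Rightarrow\overbar{h}_{\cat B}(f)=0$) combined with the observation that, because $\pi_*K(\frak p)=\kappa(\frak p)$ is a graded field and $x,y$ are compact, $K(\frak p)\otimes x$ and $K(\frak p)\otimes y$ are finite sums of shifts of $K(\frak p)$ on which morphisms are detected by $\pi_*$, so $K(\frak p)_*(f)=0\Rightarrow K(\frak p)\otimes f=0$.
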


\begin{Rem}
	This generalizes the nilpotence theorem of \cite[Corollary~2.10]{Mathew15}.
\end{Rem}

\subsection*{Further examples}

\begin{Exa}[Cochain algebras]
	Let $X$ be a connected space and let $C^*(X;\Fp) \coloneqq F(\Sigma^{\infty}_+X,\HFp)$ denote the ring spectrum of $\Fp$-valued cochains on $X$. The question of when the homotopy category of $\Mod_{C^*(X;\Fp)}$ is stratified by $H^*(X;\Fp)$ has been investigated in \cite{BarthelCastellanaHeardValenzuela19,BarthelCastellanaHeardValenzuela19pp}. For example, this holds in the following cases:
	\begin{enumerate}
		\item $X=BG$ is the classifying space of a compact Lie group, a $p$-local group, a Kac--Moody group, or a connected $p$-compact group. 
		\item $X$ is a connected $H$-space with noetherian mod $p$ cohomology. 
	\end{enumerate}
	\Cref{thm:homological} then applies to show that the comparison map $\phi$ is a bijection. By Balmer's abstract nilpotence theorem \cite[Thm.~1.1]{Balmer20_nilpotence} this implies that for each $\cat P \in \Spc(\Mod_{C^*(X;\Fp)}^{c}) \cong \Spec(H^*(X;\Fp))$ there exists a unique homological tensor functor 
	\[
		\overbar{h}_{\cat B(\cat P)} \colon \Mod_{C^*(X;\Fp)} \to \cat {A}_{\cat P}
	\]
	to some Grothendieck tensor category $\cat A_{\cat P}$ whose kernel when restricted to the catgeory of compact objects $\Mod_{C^*(X;\Fp)}^{c}$ is exactly $\cat P$ (compare \cite[Rem.~5.14]{Balmer20_nilpotence}). Moreover, the family of homological tensor functors
	\[
		\SET{\overbar{h}_{\cat B(\cat P)}  }{\cat P \in \Spc(\Mod_{C^*(X;\Fp)}^{c})}
	\]
	detects tensor-nilpotence. This is of interest since in this example there is no obvious candidate for the construction of residue fields. 
\end{Exa}

\begin{Exa}[Spectral Mackey functors]
	Suppose $\bbE$ is a commutative ring spectrum with the property that $\Spc(\Der(\bbE)^c)$ is noetherian.  It is established in \cite[Thm~15.1]{bhs1} that if $\Der(\bbE)$ is stratified then so is the category of $\bbE$-valued spectral $G$-Mackey functors $\Mack{G}{\bbE}$ for any finite group $G$.  Consequently, there are various categories of spectral Mackey functors to which we can apply \cref{thm:homological}.  See \cite{PatchkoriaSandersWimmer22} and \cite{bhs1} for further discussion of these examples.
\end{Exa}

\bibliographystyle{alphasort}\bibliography{TG-articles}
\end{document}